\documentclass{article}

\usepackage[letterpaper,margin=1in]{geometry}
\usepackage{mathtools, verbatim, rotating, comment, epic, amsmath, amsfonts, amsthm, amssymb, graphicx, tablefootnote}
\usepackage[dvipsnames]{xcolor}
\usepackage[shortlabels]{enumitem}
\usepackage{url,hyperref,multirow}

\newtheorem{thm}{Theorem}
\newtheorem{lem}{Lemma}

\newtheorem{conj}{Conjecture}

\theoremstyle{definition}

\theoremstyle{remark}
\newtheorem{rem}{Remark}

\newcommand{\NN}{\mathbb{N}}

\newcommand{\ZZ}{\mathbb{Z}}

\newcommand{\D}{\mathcal{D}}

\renewcommand{\S}{\mathcal{S}}

\newcommand{\bs}{\backslash}

\renewcommand{\to}{\mathop{\rightarrow}\limits}

\newcommand{\Size}[1]{\left\lvert #1 \right\rvert}

\newcommand{\union}{\cup}

\newcommand{\<}{\left\langle}
\renewcommand{\>}{\right\rangle}
\renewcommand{\(}{\left(}
\renewcommand{\)}{\right)}

\newcommand{\ignore}[1]{}

\renewcommand{\epsilon}{\varepsilon}

\hyphenation{Dats-kov-sky Bhar-ga-va Pa-le-stri-na}

\begin{document}

\title{Degree asymptotics of the numerical semigroup tree}

\author{Evan O’Dorney}

\maketitle

\noindent Semigroup Forum (2013) \textbf{87}:601–616;
DOI 10.1007/s00233-013-9486-7

\noindent Received: 21 October 2012 / Accepted: 1 March 2013 / Published online: 9 April 2013

\noindent \emph{\small Published version © Springer Science+Business Media New York 2013. This version (2024) corrects some typos, especially in the statement and proof of Lemma \ref{lem:11}.}

\begin{abstract}
A \emph{numerical semigroup} is a subset $\Lambda$ of the nonnegative integers that is closed under addition, contains $0$, and omits only finitely many nonnegative integers (called the \emph{gaps} of $\Lambda$). The collection of all numerical semigroups may be visually represented by a tree of element removals, in which the children of a semigroup $\Lambda$ are formed by removing one element of $\Lambda$ that exceeds all existing gaps of $\Lambda$. In general, a semigroup may have many children or none at all, making it difficult to understand the number of semigroups at a given depth on the tree. We investigate the problem of estimating the number of semigroups at depth $g$ (i.e.\ of genus $g$) with $h$ children, showing that as $g$ becomes large, it tends to a proportion $\phi^{-h-2}$ of all numerical semigroups, where $\phi$ is the golden ratio.
\end{abstract}

\paragraph{Keywords} Numerical semigroup · Semigroup tree · Genus · Efficacy

\section{Introduction} \label{sec:1}

A \emph{numerical semigroup} is a subset $ \Lambda $ of the nonnegative integers $\NN_0$ that is closed
under addition, contains $ 0 $, and contains all but finitely many nonnegative integers.
Numerical semigroups have notable applications to algebraic geometry (see \cite{5} for
more details), but the decade of the 2000's has seen a surge of purely combinatorial interest in
them, and in particular the first book-length treatment of them \cite{3} has been published.

Various researchers in the field have investigated counting numerical semigroups
by genus, multiplicity, and/or Frobenius number. The \emph{genus} $ g = g(\Lambda) $ of a numerical
semigroup is the size of the set of gaps, $ g = |\NN_0\bs \Lambda| $. The \emph{multiplicity} is the smallest nonzero element, $ m = m(\Lambda) = \min(\Lambda\bs\{0\}) $, while the \emph{Frobenius number} is the
largest nonnegative integer not included, $ f = f (\Lambda) = \max(\NN_0\bs \Lambda) $.

Research in the field of numerical semigroups has focused extensively on the number $ N(g) $ of numerical semigroups of genus $ g $. Building on work by Zhao \cite{8}, Zhai
\cite{7} proved that $ N(g) $ has a Fibonacci-like order of growth:
\begin{thm}[\cite{7}; conjectured by Bras-Amorós in \cite{1}] \label{thm:1} There is a constant $ S > 3.78 $
such that
\[
  N(g) = S\phi^g + o(\phi^g )
\]
where $\phi = (1+\sqrt5)/2$ is the golden ratio.
\end{thm}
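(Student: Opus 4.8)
\emph{Proof proposal.} The plan is to stratify numerical semigroups by their ``depth'' relative to the multiplicity, extract an explicit Fibonacci-type main term from the shallow strata, and show that the deep semigroups contribute only $o(\phi^g)$.

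First I would set up the block decomposition. Fix the multiplicity $m=m(\Lambda)$ and write $\Lambda$ as the disjoint union of the blocks $B_k=\Lambda\cap[km,(k+1)m)$, identifying $B_k$ with $S_k=\{\,s\in\{0,1,\dots,m-1\}:km+s\in\Lambda\,\}$. Then $S_0=\{0\}$, $0\in S_k$ for all $k$, $S_1\subseteq S_2\subseteq\cdots$, and $S_k=\{0,\dots,m-1\}$ once $km>f(\Lambda)$, while closure under addition becomes the requirement ``$s+s'\in S_{k+\ell}$ if $s+s'<m$, and $s+s'-m\in S_{k+\ell+1}$ otherwise''; the genus is $g=\sum_{k\ge 0}(m-|S_k|)=(m-1)+\sum_{k\ge 1}(m-|S_k|)$. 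I would then settle the stratum $f<2m$, which is exactly ``$S_k$ full for every $k\ge 2$'': there no closure relation survives, $\Lambda$ is determined by the single set $S_1\ni 0$ with $|S_1|=2m-1-g$, and a bare count gives
\[
  \#\{\Lambda:g(\Lambda)=g,\ f(\Lambda)<2m(\Lambda)\}
  \;=\;\sum_m\binom{m-1}{2m-2-g}
  \;=\;\sum_j\binom{j}{g-j}
  \;=\;F_{g+1},
\]
the Fibonacci number $F_{g+1}$ (by the identity $\sum_k\binom{n-k}{k}=F_{n+1}$). In particular $N(g)\ge F_{g+1}\sim\phi^{g+1}/\sqrt5$, which already exhibits Fibonacci-rate growth from below and gives $S\ge\phi/\sqrt5$.

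Next, for each fixed $q$ let $\F_q(g)$ be the set of genus-$g$ semigroups of depth at most $q$, i.e.\ with $f<qm$, equivalently with $S_k$ full for $k\ge q$. Such a semigroup amounts to a chain $S_1\subseteq\cdots\subseteq S_{q-1}$ of subsets of an alphabet of size $m$ constrained by the finitely many surviving closure relations, so the bivariate generating function recording $m$ and $g$ is driven by a finite automaton in the ``$k$'' direction; after summing over $m$ one expects a rational generating function in the genus variable whose dominant singularity sits at $1/\phi$, giving $\#\F_q(g)=c_q\phi^g+O(\rho_q^{\,g})$ with $\rho_q<\phi$ and $c_2=\phi/\sqrt5\le c_3\le c_4\le\cdots$. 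The decisive ingredient — and the step I expect to be the main obstacle — is the uniform tail bound
\[
  \#\{\Lambda:g(\Lambda)=g,\ f(\Lambda)\ge qm(\Lambda)\}\;\le\;\eta_q\,\phi^g\quad(\text{all }g),\qquad\eta_q\to 0.
\]
The crude count here (nested co-set chains $S_1^c\supseteq S_2^c\supseteq\cdots$ of prescribed total size, of which there are stars-and-bars-many) badly overshoots $\phi^g$, so the closure relations must be used decisively: a deep semigroup is forced to have each co-set $S_k^c$ ``cover'' all additive representations of its own elements, an obstruction that tightens as the depth grows, and the task is to convert this rigidity into an exponential saving uniform in $g$ that drives $\eta_q$ to $0$. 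I would attempt this by an entropy / large-deviations estimate on admissible co-set profiles, perhaps first proving that the proportion of genus-$g$ semigroups of depth $>q$ is $o(1)$ and then bootstrapping along the tree.

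Granting these, for every fixed $q$ one gets $c_q\le\liminf_g N(g)/\phi^g\le\limsup_g N(g)/\phi^g\le c_q+\eta_q$; since $(c_q)$ is non-decreasing and bounded (the tail bound itself yields $N(g)=O(\phi^g)$) and $\eta_q\to 0$, the limit $S:=\lim_g N(g)/\phi^g=\lim_q c_q$ exists, which is precisely $N(g)=S\phi^g+o(\phi^g)$. Finally $S>3.78$ follows by computing $c_q$ from the automaton for a small value of $q$ and checking the numerical inequality — a depth bound as low as $q=3$ plausibly already captures almost all the mass — and throughout it is the unique-parent structure of the semigroup tree, via the basic recursion $N(g+1)=\sum_{g(\Lambda)=g}h(\Lambda)$, that keeps these recursions and the $O(\phi^g)$ bound tractable.
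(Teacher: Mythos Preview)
This theorem is not proved in the paper; it is quoted from Zhai~\cite{7} and used as a black box. The paper imports three lemmas from Zhai's argument (Lemmas~\ref{lem:1}--\ref{lem:3}) for its own purposes but does not reproduce a proof of Theorem~\ref{thm:1}, so there is no in-paper proof to compare against.

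On the merits of your outline: the depth stratification and the $q=2$ Fibonacci count $\#\{f<2m,\ g(\Lambda)=g\}=F_{g+1}$ are correct and are essentially the starting point of Zhao~\cite{8} and Zhai~\cite{7}; the numerical bound $S>3.78$ does indeed come from pushing such a lower-bound computation to larger depth, as the paper notes in Section~\ref{sec:6}. But the proposal is not a proof. You yourself flag the uniform tail bound $\#\{f\ge qm\}\le\eta_q\phi^g$ with $\eta_q\to0$ as ``the main obstacle'' and then offer only a vague ``entropy / large-deviations'' gesture in its place; this is precisely the hard content of Zhai's theorem, and nothing in your sketch indicates how to get it. Judging from the lemmas the paper cites, Zhai's actual argument does \emph{not} proceed via depth $f/m$ and finite automata: he works on the semigroup tree through the counts $s(g,h)$ of strongly descended semigroups, splitting into a ``good'' region $g<2h$ where $s(g,h)=r(g-h)$ and $\sum_n r(n)\phi^{-n}$ converges (Lemmas~\ref{lem:1}--\ref{lem:2}) and a ``bad'' region $g\ge 2h$ handled by the delicate estimate recorded here as Lemma~\ref{lem:3}. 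Your fixed-$q$ step is also only asserted: that the generating function of $\#\F_q(g)$ is rational with dominant pole exactly at $1/\phi$ for every $q$ needs proof, and the boundedness of the $c_q$ (needed for $S=\lim_q c_q$) presupposes an a~priori $N(g)=O(\phi^g)$ bound that itself depends on the tail estimate you have not supplied.
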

Surprisingly enough, it is still not known whether the sequence $\{N(g)\}$ is increasing.
\begin{conj}[\cite{4}] \label{conj:1}
For all $ g \geq 1 $, the inequality $ N(g) < N(g + 1) $ holds.
\end{conj}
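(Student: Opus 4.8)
Since this is Conjecture \ref{conj:1}, which the preceding sentence records as open, what follows is a strategy rather than a finished proof. The natural framework is the tree: each numerical semigroup of genus $g+1$ has a unique parent of genus $g$ (adjoin its Frobenius number), so
\[
N(g+1) = \sum_{g(\Lambda)=g} h(\Lambda)
\qquad\Longrightarrow\qquad
N(g+1) - N(g) = \sum_{g(\Lambda)=g}\bigl(h(\Lambda)-1\bigr),
\]
where $h(\Lambda)$ denotes the number of children. The conjecture is therefore equivalent to: for every $g\ge 1$, the genus-$g$ semigroups with at least two children furnish enough ``surplus'' children to cover the $-1$ deficit contributed by each leaf (semigroup with $h(\Lambda)=0$).

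For large $g$ this already follows from the known asymptotics. Theorem \ref{thm:1} gives $N(g+1)-N(g) = S(\phi^{g+1}-\phi^{g}) + o(\phi^{g}) = S\phi^{g-1} + o(\phi^{g})$, which is positive once $g$ is large; the proportion result of this paper refines the picture, since the fraction of genus-$g$ semigroups with exactly $h$ children tends to $\phi^{-h-2}$, and $\sum_{h\ge 0}\phi^{-h-2}=1$ while $\sum_{h\ge 0}h\,\phi^{-h-2}=\phi$, so the average number of children tends to $\phi$ and the average surplus $h(\Lambda)-1$ to $\phi-1=\phi^{-1}>0$. The plan is then in two parts: (i) produce an \emph{effective} threshold $g_0$ beyond which $N(g+1)>N(g)$ is provable; (ii) verify $N(g)<N(g+1)$ for $1\le g<g_0$ directly from the tabulated values of $N(g)$, which have been enumerated far past any realistic $g_0$ and are strictly increasing.

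Part (i) is the crux, and the reason the conjecture remains open: the error term in Theorem \ref{thm:1} is $o(\phi^g)$ with no stated rate, and the proportion result is a bare limit. Making $g_0$ explicit requires a quantitative refinement — ideally a power-saving estimate $N(g) = S\phi^g + O(\theta^g)$ with an explicit $\theta<\phi$ and explicit implied constant (the bound $S>3.78$ of Theorem \ref{thm:1} then suffices on the main-term side) — which one would extract from the transfer-operator / generating-function machinery behind Zhao's and Zhai's work, e.g.\ by exhibiting a spectral gap and controlling secondary eigenvalues. One would simultaneously need a uniform geometric tail bound, $\#\{\Lambda : g(\Lambda)=g,\ h(\Lambda)\ge H\}=O(\rho^{H}N(g))$ with $\rho<1$, so that the heavy end of the children distribution cannot spoil the surplus; the children-distribution apparatus developed in this paper is exactly what one would hope to push in this direction.

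A route that sidesteps the analysis is an explicit combinatorial injection $\{\Lambda : g(\Lambda)=g\}\hookrightarrow\{\Lambda : g(\Lambda)=g+1\}$: send each non-leaf to one distinguished child, and route each leaf of genus $g$ to a spare child of some canonically attached branching ancestor. The difficulty is that leaves need not be isolated — a single branching semigroup can have many leaves clustered near it — so one must design a ``charging'' scheme showing that the branching semigroups of genus $g$ have, in aggregate, enough unused children to absorb every genus-$g$ leaf. That is the surplus inequality again, now demanded with an explicit witness instead of merely on average; producing such a witness, or else an effective error term for $N(g)$, is the essential obstacle either way.
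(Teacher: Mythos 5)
You are right to flag at the outset that this is not a proof: the statement you were given is an open conjecture (due to Kaplan \cite{4}), and the paper itself offers no proof of it --- it explicitly notes that it is still unknown whether $\{N(g)\}$ is increasing, and the paper's actual theorem (Theorem \ref{thm:2}, resolving Conjecture \ref{conj:2}) gives only the limiting distribution of efficacies, not monotonicity of $N(g)$. So there is nothing in the paper to compare your argument against, and your proposal, by your own admission, is a strategy with its central step missing.

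The ingredients you do write down are sound: since every genus-$(g+1)$ semigroup has a unique parent of genus $g$, the identity $N(g+1)=\sum_{g(\Lambda)=g}h(\Lambda)$ holds, so the conjecture is indeed equivalent to the surplus inequality $\sum_{g(\Lambda)=g}(h(\Lambda)-1)>0$; and the heuristic that the average efficacy tends to $\phi$ (so the average surplus tends to $\phi-1=\phi^{-1}>0$) is a correct consequence of the limit in Conjecture \ref{conj:2}. The genuine gap is exactly where you place it: both Theorem \ref{thm:1} and the proportion result are ineffective --- the error term is a bare $o(\phi^g)$ with no rate, and nothing in Zhai's or this paper's machinery currently yields an explicit threshold $g_0$, nor the uniform tail bound on $\#\{\Lambda: h(\Lambda)\ge H\}$ you would need; moreover your claim that tabulated values cover ``any realistic $g_0$'' is unsupported, since without an effective bound there is no realistic $g_0$ at all. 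The injection/charging route faces the same obstruction in combinatorial clothing (leaves can cluster, and no canonical assignment of leaves to surplus children is known). The paper's own concluding section points in a related direction --- its recursion framework recovers Ye's bound $N(g+1)\ge N(g)-N(g-1)$ using only nonnegativity of $s(g,h)$, and it suggests analyzing diagonal differences and sharpening the bounds on $s(g,h)$ in the ``bad'' region $g\ge 2h$ --- which is where any serious attack on this conjecture via these methods would have to go. As it stands, your proposal correctly diagnoses the problem but proves nothing beyond what is already known.
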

Theorem 1 shows that $ \{N(g) \} $ is increasing for sufficiently large $g$, but the bounds
that it gives for when this happens are very large. The even stronger conjecture that
$ N(g + 2) \geq N(g + 1) + N(g) $ has been proposed \cite{1}.

Traditionally, numerical semigroups are written in terms of generating sets. If
$x_1, x_2, \ldots, x_n$ are setwise relatively prime positive integers, then the semigroup generated by $x_1, \ldots, x_n$ is the set of their nonnegative linear combinations:
\[
  \<x_1, x_2, \ldots, x_n\> = \{a_1x_1 + \cdots + a_n x_n \mid a_1, \ldots, a_n \in \NN_0\}.
\]
However, as an alternative to building a semigroup upwards from a finite set, we
can build downwards by removing elements one at a time from the maximal semigroup $\NN_0$. This is the guiding principle of the \emph{semigroup tree,} a useful tool for visually understanding the set of all numerical semigroups. This tool can be dated back
at least to Bras-Amorós \cite{2}; it has also figured prominently in later work including
the proof of Theorem 1. Formally, we assign to any numerical semigroup $ \Lambda \neq \NN_0 $ its
parent $ P (\Lambda) = \Lambda \union f (\Lambda) $. Conversely, if $ \Lambda $ is a numerical semigroup, its children
are the sets of the form $ \Lambda\bs\{x\} $ such that $ x > f (\Lambda) $ and $ \Lambda\bs\{x\} $ is, in fact, a numerical
semigroup. It is not hard to prove that the infinite tree generated by this construction
contains every numerical semigroup exactly once. A portion of it is shown in Figure \ref{fig:1}.
\begin{figure}
  \centering
  \includegraphics[width=0.6\textwidth]{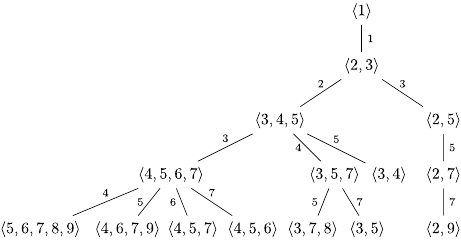}
  \label{fig:1}
  \caption{Tree showing the numerical semigroups of genus $g \leq 4$}
\end{figure}

The \emph{efficacy} $ h(\Lambda) $ of a numerical semigroup $ \Lambda $ is the number of children it has
on the tree. Equivalently, it is the number of positive integers $ x > f (\Lambda) $ such that
$ \Lambda\bs\{x\} $ is still closed under addition. How many such $ x $ exist can vary wildly; in the
tree above, we see that the semigroup $ \<3, 4\> $ is a \emph{leaf}---a node with efficacy zero---%
while the node $ \<4, 5, 6, 7\> $ has efficacy as high as four. This serves to illustrate why
the relations between $ N(g) $ and $ N(g + 1) $ are so complicated and statements such as
Conjecture \ref{conj:1} are difficult to prove. Currently the best bound we have that holds for
all $ g $ is the inequality $ N(g + 1) \geq N(g) - N(g - 1) $, proved by Ye in \cite{6} using the
concept of semigroup efficacy.

Motivated by these concerns, Ye investigated in the same work the question of
estimating the number $ t (g, h) $ of semigroups of genus $ g $ and efficacy $ h $. (In more
graphic terms, $ t (g, h) $ is the number of nodes on the tree at depth $ g $ with $ h $ children.)
Ye proved that there exist constants $c_1$ and $c_2$ such that $c_1\phi^{g-h} \leq t(g, h) \leq c_2\phi^{g-h}$;
she conjectured that, in the limit, these constants are equal and tied to the constant $S$
in Zhai’s estimate for $ N(g) $:

\begin{conj}[Conjecture 2 in \cite{6}] \label{conj:2} For all $h \geq 0$,
\[
\lim_{g\to\infty}
t (g, h)
N(g) = \frac{1}{\phi^{h+2}}.
\]
\end{conj}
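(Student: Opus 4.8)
The plan is to prove, for each fixed $h$, that $t(g,h)=S\phi^{g-h-2}+o(\phi^{g})$; dividing by the estimate $N(g)=S\phi^{g}+o(\phi^{g})$ of Theorem~\ref{thm:1} then gives Conjecture~\ref{conj:2}. Equivalently, writing $T_{g}(z):=\sum_{h\ge0}t(g,h)z^{h}$, I would aim to show
\[
T_{g}(z)=\frac{S\,\phi^{g}}{\phi^{2}-z\phi}+o(\phi^{g})\qquad\text{uniformly for }z\text{ in a neighbourhood of }1,
\]
i.e.\ that the efficacy of a uniformly random genus-$g$ semigroup converges in law to the geometric distribution with ratio $\phi^{-1}$; note that $z=1$ just recovers $T_{g}(1)=N(g)$ because $\phi^{2}-\phi=1$, so the content is in the shape of the polynomial $T_{g}$. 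One might try to work inside the tree — a genus-$(g{+}1)$ semigroup is a child of a unique parent of genus $g$, and the child's efficacy equals the parent's efficacy minus the rank of the deleted generator among those exceeding $f$, plus the number of minimal generators newly created by the deletion — but that last term is genuinely present and awkward, so I would instead build on the census of semigroups behind Theorem~\ref{thm:1}.

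\textbf{Step 1: restrict to the dominant family.} The concentration results underlying Theorem~\ref{thm:1} show that all but $o(\phi^{g})$ of the genus-$g$ semigroups lie in the family $\D_{g}$ of those of depth at most $3$ (that is, $f(\Lambda)<3m(\Lambda)$). After Zhai's reductions, a member of $\D_{g}$ is described by its multiplicity $m$ together with its Kunz word $(q_{1},\dots,q_{m-1})\in\{1,2,3\}^{m-1}$, where $q_{i}$ is read off the least element of $\Lambda$ in the residue class $i\bmod m$; the genus is $\sum_{i}q_{i}$, and the admissible words form—up to a negligible set—a regular language whose weighted transfer matrix has Perron eigenvalue $\phi$. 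This eigenvalue is the source of the $\phi^{g}$ in Theorem~\ref{thm:1}, and $S$ is assembled from the associated Perron eigenvectors. It therefore suffices to prove the displayed asymptotic for $T_{g}^{\D}(z):=\sum_{h}\#\{\Lambda\in\D_{g}:h(\Lambda)=h\}\,z^{h}$.

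\textbf{Step 2: localize the efficacy (the main obstacle).} The key claim is that $h(\Lambda)$ — the number of minimal generators of $\Lambda$ exceeding $f(\Lambda)$ — is visible only near the top of $\Lambda$: all such generators lie in $(f(\Lambda),f(\Lambda)+m]$ and are determined by the top of the Apéry set, and, outside a sub-family of $\D_{g}$ of size $o(\phi^{g})$, $h(\Lambda)$ is a function of a bounded-length suffix and a bounded-length prefix of the Kunz word. Working out the depth-$2$ case first (where $(q_{i})\in\{1,2\}^{m-1}$ is unconstrained), one finds $h(\Lambda)$ equals the length of the trailing run of $1$'s of $(q_{i})$, plus a contribution from the leading letters, plus a \emph{sporadic} term counting generators created by coincidences among pairwise sums of Apéry elements. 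Bounding the sporadic term is the heart of the argument: since a positive density of the $q_{i}$ equal $1$, an Apéry element fails to be a sum of two smaller Apéry elements only with probability exponentially small in its residue, so for each fixed $h$ the semigroups in which sporadic generators push the efficacy up to $h$ should form a density-zero family. Making this uniform, and redoing the local efficacy formula in the presence of the depth-$3$ letters $q_{i}=3$ (which shift $f$), is where I expect the real difficulty to lie, and it will likely need a careful case analysis of the Kunz word near its two ends together with large-deviation bounds for the transfer matrix.

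\textbf{Step 3: evaluate.} Granting Step~2, $\#\{\Lambda\in\D_{g}:h(\Lambda)=h\}$ equals, up to $o(\phi^{g})$, the number of weight-$g$ admissible words whose end-data give local efficacy $h$. Splitting the transfer matrix at the two ends and inserting the Perron projector in the bulk rewrites this count as $\phi^{g}$ times a finite sum over admissible prefixes and suffixes weighted by the left and right Perron eigenvectors; within that sum, continuing a run by appending a $1$ rather than terminating it carries a fixed weight ratio, which in the relevant normalization equals $\phi^{-1}$, so the dependence on $h$ is geometric. Performing the sum and simplifying with $\phi^{2}=\phi+1$ and $\phi-1=\phi^{-1}$ would collapse it to $S\phi^{g}\sum_{h}\phi^{-h-2}z^{h}=S\phi^{g}/(\phi^{2}-z\phi)$ — here it is precisely the value $\phi$ of the Perron eigenvalue that forces the answer $\phi^{-h-2}$. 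Combined with Step~1, and with Ye's a priori bound $t(g,h)=O(\phi^{g-h})$ to control the tail in $h$ (so that reading off the coefficient of $z^{h}$ is legitimate), this gives $t(g,h)=S\phi^{g-h-2}+o(\phi^{g})$ and hence Conjecture~\ref{conj:2}.
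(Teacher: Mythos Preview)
Your route is genuinely different from the paper's, and Step~2 is a real gap, not a routine verification.

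The paper never touches Kunz coordinates, transfer matrices, or any structural description of individual semigroups. Instead it introduces $s(g,h)$, the number of \emph{strongly descended} semigroups (those for which $m+f$ is an effective generator) of genus $g$ and efficacy $h$, and proves a linear recursion expressing $t(g,h)$ as a second difference of an auxiliary function $T_{\hat s}(g,h)$ built from $\hat s$. That recursion is solved explicitly as a binomial-weighted sum (Lemma~\ref{lem:6}), after which $\hat s$ is split as $s_1(g,h)=r(g-h)$ plus a remainder $s_2$ supported on $g\ge 2h$. The $s_1$ piece reduces to Fibonacci asymptotics via Zhai's Lemma~\ref{lem:2} on $\sum r(n)\phi^{-n}$; the $s_2$ piece is killed by Zhai's Lemma~\ref{lem:3} on the region $\D_a$. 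Everything is a manipulation of pre-existing Zhai estimates, with no new input about the internal structure of semigroups.

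Your Step~2, by contrast, asks for a new structural theorem: that outside an $o(\phi^g)$ family, $h(\Lambda)$ is a function of a bounded window of the Kunz word. You correctly identify the obstruction --- the ``sporadic'' effective generators $2m+i$ with $q_i=2$ and $i$ not a sum of two $1$-indices --- but you have not bounded them. Note that the sporadic generator at $i=j$ (the top one) is exactly the strong generator $m+f$, so the semigroups where sporadic terms matter overlap heavily with the strongly descended ones; controlling them uniformly in $g$ and $h$ is very close in content to Zhai's delicate Lemma~\ref{lem:3}, which you would effectively be re-proving inside the Kunz framework rather than quoting. Your exponential-decay heuristic is plausible for a single $i$ under the uniform measure on $\{1,2\}^{m-1}$, but you need it under the actual weighting (which conditions on genus, couples $m$ to $g$, and in depth~$3$ imposes genuine admissibility constraints among the $q_i$), and you need it summed over $i$ and uniformly in $h$. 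That is substantial work you have not done, and the paper's recursion-based approach sidesteps it entirely.
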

In what follows, we will prove this conjecture. We restate it in a strengthened
form emphasizing that $ t (g, h) $ tends asymptotically to its conjectured value, not just
for fixed $ h $, but uniformly.
\begin{thm} \label{thm:2}
Let
\[
S = \lim_{g\to\infty}
\frac{N(g)}{\phi^{g}}
\]
be the semigroup constant from Theorem 1. Then
\begin{equation} \label{eq:1}
\sum_{
h\geq0}
\Size{t (g, h) - S\phi^{g-h-2}} = o(\phi^g ).
\end{equation}
\end{thm}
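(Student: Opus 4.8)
The plan is to relate the count $t(g,h)$ to $N(g)$ by looking, for each semigroup $\Lambda$ of genus $g$, at its parent $P(\Lambda)$ of genus $g-1$, and tracking how the "new" gap created by passing from $P(\Lambda)$ to $\Lambda$ interacts with the efficacy. Concretely, I would first set up the bookkeeping: for a semigroup $\Gamma$ of genus $g-1$ with efficacy $h(\Gamma)$, its children are $\Gamma\setminus\{x\}$ for $x$ ranging over the $h(\Gamma)$ "effective generators" $x>f(\Gamma)$; I would compute the efficacy of each such child in terms of combinatorial data of $\Gamma$. The key structural observation (already implicit in Ye's work) is that removing the \emph{smallest} effective generator $x$ of $\Gamma$ tends to produce a child whose efficacy is one less, while removing larger effective generators behaves differently. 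The main engine will be showing that, for the overwhelming majority of semigroups of large genus, the local picture near the Frobenius number looks like that of the "generic" semigroup $\{0\}\cup[m,\infty)$-type tails, where the efficacy drops in a predictable Fibonacci pattern.

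Next I would make this precise using Zhai's and Zhao's structural results underlying Theorem \ref{thm:1}: a proportion $1-o(1)$ of semigroups of genus $g$ have Frobenius number $f$ satisfying $f<2m$ (equivalently, $\Lambda\supseteq[m,2m)$ up to finitely many exceptions), and in fact the "interval-type" or "almost-interval-type" semigroups dominate. For these dominant semigroups, I would explicitly determine $h(\Lambda)$: if $\Lambda$ looks like an interval $[m,f]$ with a few extra small gaps, its effective generators are exactly $f+1,f+2,\dots,f+k$ for an explicit $k$ controlled by the gap structure, so $h$ is essentially determined by the "depth of the last run of gaps." Then I would show that, among the children of such a $\Lambda$, the distribution of efficacies is governed by a transfer-matrix-style recursion whose dominant eigenvalue is $\phi$ and whose stationary vector has entries proportional to $\phi^{-h}$. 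Summing the recursion across all of genus $g$ and comparing with $N(g)=S\phi^g(1+o(1))$ should yield $t(g,h)=S\phi^{g-h-2}+(\text{error})$ with the error controlled termwise.

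To get the \emph{uniform} statement \eqref{eq:1} — the sum over all $h$ of the absolute errors is $o(\phi^g)$ — I would not bound each $|t(g,h)-S\phi^{g-h-2}|$ separately (that would only give $o(\phi^g)$ for each fixed $h$ and not control the tail). Instead I would split the sum at $h=h_0:=h_0(g)$ chosen to grow slowly, say $h_0=\lfloor\log g\rfloor$. For $h\ge h_0$, I would use Ye's upper bound $t(g,h)\le c_2\phi^{g-h}$ together with the geometric decay $\sum_{h\ge h_0}\phi^{g-h}\ll\phi^{g-h_0}=o(\phi^g)$, and similarly $\sum_{h\ge h_0}S\phi^{g-h-2}=o(\phi^g)$, so the entire tail contributes $o(\phi^g)$ with room to spare. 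For the finitely-growing range $h<h_0$, I would prove the \emph{pointwise} asymptotic $t(g,h)=S\phi^{g-h-2}+o(\phi^{g-h})$ with an error term \emph{uniform} in $h<h_0$ — this is where the structural analysis of the previous paragraph must be done with explicit, $h$-independent error bounds (the exceptional non-interval-type semigroups contribute $o(\phi^g)$ total, hence $o(\phi^{g-h})$ is generous), and then $\sum_{h<h_0}o(\phi^{g-h})=h_0\cdot o(\phi^g)=o(\phi^g\log g)$, which forces me to sharpen to $o(\phi^g/\log g)$ per term or, more cleanly, to bound $\sum_{h<h_0}|t(g,h)-S\phi^{g-h-2}|$ directly against the count of exceptional semigroups.

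The step I expect to be the main obstacle is the \textbf{uniform pointwise estimate} for $h$ in the growing range: I need to show that the error made in approximating $t(g,h)$ by $S\phi^{g-h-2}$ is controlled by (a constant times) the number of genus-$g$ semigroups that fail to be "interval-type near the Frobenius number to depth $h$," and that this exceptional count, summed over all $h<h_0$, is still $o(\phi^g)$. This requires a quantitative version of Zhao–Zhai — essentially that the number of genus-$g$ semigroups whose local structure at the top deviates from the generic pattern within the last $h$ steps decays like $\phi^{g}\cdot\rho^{h}$ for some $\rho<1$ — which I would extract either by refining their counting arguments or by a direct transfer-operator analysis of the tree restricted to the "top $h_0$ levels" of each semigroup's gap set. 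Once that quantitative input is in hand, the rest is the geometric-series tail bound and routine summation, and the golden-ratio exponent $\phi^{-h-2}$ falls out of the stationary distribution of the relevant $\phi$-eigenvalue recursion.
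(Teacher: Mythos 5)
There is a genuine gap, and you have in fact flagged it yourself: the ``uniform pointwise estimate'' in the growing range of $h$ is not an obstacle to be smoothed over later, it \emph{is} the theorem. Your plan assumes a quantitative structural input --- that the number of genus-$g$ semigroups whose top-of-the-gap-set structure deviates from the generic pattern within the last $h$ levels decays like $\phi^{g}\rho^{h}$, and that the efficacies of the ``interval-type'' majority are governed by a transfer-matrix recursion with dominant eigenvalue $\phi$ and stationary vector proportional to $\phi^{-h}$ --- but you neither construct that recursion nor prove convergence to its stationary distribution, and no such statement is available off the shelf in Zhao or Zhai. The known inputs are weaker and of a different shape: Zhai's Lemma~7 (in the ``good'' region $g<2h$ the strongly-descended count $s(g,h)$ depends only on $g-h$), his convergence of $\sum_n \phi^{-n}r(n)$, and his bound on the ``bad'' region $g\ge 2h$ (the $\D_a$ estimate). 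The paper's proof works precisely because it routes everything through $s(g,h)$: the exact recursion $t(g,h)=t(g,h+1)+t(g-1,h+1)+\hat s(g,h)-2\hat s(g,h+1)+\hat s(g,h+2)$, solved explicitly with binomial coefficients, converts those three facts into the uniform estimate $\sum_h|t(g,h)-C\phi^{g-h-4}|=o(\phi^g)$ with an a priori unknown constant $C$, which is then identified as $S\phi^{2}$ by summing over $h$ and comparing with $N(g)\sim S\phi^{g}$. Your sketch gestures at the same comparison-with-$N(g)$ step, but without an exact bookkeeping identity replacing the heuristic ``removing the smallest effective generator drops the efficacy by one'' (which is not correct as stated: a child's efficacy is its number of inherited weak generators plus $0$, $1$ or $2$ according to strong/super-strong descent, and these correction terms are exactly the hard part), the claimed $\phi^{g}\rho^{h}$ exceptional-count bound remains an unproved conjecture doing all the work.

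Two smaller points. First, your summation over the range $h<h_0$ is handled incorrectly: if you really had $|t(g,h)-S\phi^{g-h-2}|\le \epsilon_g\phi^{g-h}$ uniformly in $h$, the sum over all $h$ is at most $\epsilon_g\phi^{g}/(1-\phi^{-1})=o(\phi^{g})$ by the geometric factor $\phi^{-h}$; your estimate ``$h_0\cdot o(\phi^{g})=o(\phi^{g}\log g)$'' throws away that decay and then forces an unnecessary sharpening --- so the splitting at $h_0=\lfloor\log g\rfloor$ and the appeal to Ye's upper bound $t(g,h)\le c_2\phi^{g-h}$ are not needed once the uniform error is in hand. Second, even for the ``dominant'' semigroups with $f<2m$, determining the efficacy is not simply a matter of reading off a run of gaps; it is exactly the strong/super-strong generator dichotomy, and the distribution of these among descendants is what $s(g,h)$ and $r(n)$ encode. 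So your program could in principle be carried out, but only by reproving (a strengthening of) Zhai's lemmas, which is substantially harder than the route the paper takes.
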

It is easy to see that, by considering just the term of the left-hand side of \eqref{eq:1}
corresponding to one fixed value of $ h $, Conjecture \ref{conj:2} is a corollary of this.
The remainder of this paper will be a proof of this theorem. Section 2 lays down
various facts that will be basic to our proof. In Sect.\ 3, we will introduce some useful
recursions involving $ t (g, h) $ and another quantity $ s(g, h) $, which counts semigroups
that have in a certain sense “one too many” children. In Sect.\ 4, we will split $ t (g, h) $
into two parts and bound them separately. Finally, in Sect.\ 5, we will reassemble our
bounds to prove Theorem 2.

\section{Preliminary facts} \label{sec:2}

If $ \Lambda $ is a numerical semigroup, then an integer $x$ is called an \emph{effective generator} of $ \Lambda $
if it is the Frobenius number of one of $ \Lambda $’s children; that is, if $ \Lambda\bs\{x\} $ is a numerical
semigroup with parent $ \Lambda $. It is known that a positive integer $ x $ is an effective generator
if and only if $ x > f (\Lambda) $ and $ x $ is not the sum of two nonzero elements of $ \Lambda $. More
pertinent to our argument is the distinction of three kinds of effective generators (see
\cite{7}):
\begin{enumerate}
  \item The effective generators of $ \Lambda $’s parent $ P (\Lambda) $ that exceed $ f (\Lambda) $. We call such numbers \emph{weak generators;} they are always effective generators of $ \Lambda $ as well.
  \item The number $ m(\Lambda) + f (\Lambda) $, which may or may not be an effective generator of $ \Lambda $.
  If it is, we call it a \emph{strong generator} and say that $ \Lambda $ is \emph{strongly descended;} otherwise $ \Lambda $ is \emph{weakly descended.}
  \item The \emph{super-strong generator} $ m(\Lambda) + f (\Lambda) + 1 $, which appears only for the special class of semigroups of the form $ \{0, m, m + 1, m + 2, \ldots\} $ $ (m \geq 2) $. These
  semigroups are also strongly descended, and we call them \emph{super-strongly descended.}
\end{enumerate}
For convenience later, we will declare that the trivial semigroup $ \NN_0 $ is strongly descended but not super-strongly descended.

\subsection{Useful lemmas from Zhai} \label{sec:2.1}
Although Zhai’s paper was primarily aimed at bounding $ N(g) $, his employment of
the semigroup tree led him to prove several lemmas directly relevant to our study
of $ t (g, h) $. Three especially pertinent lemmas were already isolated by Ye in \cite{6}; we
reproduce them here with minor changes in notation.

Let $ s(g, h) $ be the number of \emph{strongly} descended semigroups with genus $ g $ and
efficacy $ h $. Since, in the next section, we will express $ t (g, h) $ as a simple expression in
$ s(g, h) $, it is useful to have accurate knowledge of the behavior of $ s(g, h) $.
The following lemma states that in a certain domain, $ s(g, h) $ depends only upon the
difference $ g - h $.
\begin{lem}[Lemma 7 from \cite{7}] \label{lem:1} Let $ r(n) = s(2n + 1, n + 1) $. If $ h < g < 2h $, then
$ s(g, h) = r(g - h) $.
\end{lem}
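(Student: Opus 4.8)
The plan is to show that strong descendedness and efficacy, in the regime $h < g < 2h$, are controlled entirely by the "top" portion of the semigroup near its Frobenius number, so that the genus $g$ only enters through a rigid shift. Concretely, given a strongly descended semigroup $\Lambda$ of genus $g$ and efficacy $h$ with $h < g < 2h$, I would first record the structural consequences of these inequalities. The efficacy $h$ counts effective generators $x > f(\Lambda)$; since each such $x$ lies in the interval $(f, 2f]$ roughly, and more importantly since $g < 2h$ forces the gap set to be "dense" in a way that pins down $m(\Lambda)$ — in fact one expects $m(\Lambda) = g - h + 1$ or a similarly small quantity, because a large multiplicity would create too many gaps relative to the number of effective generators. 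The key point is that when $g - h$ is bounded and $g$ is large, $\Lambda$ must look like $\{0\} \cup [\text{small initial block}] \cup \{f+1, f+2, \dots\}$ with all the "action" (gaps, generators, the sum-of-two-elements obstructions defining effective generators) confined to a window of size $O(g-h)$ near $f$.

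The main step is then to construct an explicit genus-raising bijection. Given $\Lambda$ strongly descended of genus $g$, efficacy $h$ with $h < g < 2h$, I would produce a semigroup $\Lambda'$ of genus $g+1$, efficacy $h+1$ (equivalently, with the same $g - h$), still strongly descended, by inserting one new gap far below $f(\Lambda)$ — specifically, turning some element of $\Lambda$ in the lower "solid" block into a gap, in a position low enough that it does not interact additively with any effective generator or with the pair $(m, f)$ witnessing strong descent. The conditions $h < g$ and $g < 2h$ are exactly what guarantee room to do this: $g < 2h$ keeps $m(\Lambda)$ small so the relevant window stays bounded, while $h < g$ guarantees there is at least one "removable" low element that is genuinely an element (not already a gap) and whose removal preserves the semigroup property and does not disturb the count of effective generators or the strong/weak dichotomy. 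Checking that this map is a well-defined bijection onto the strongly descended semigroups of genus $g+1$, efficacy $h+1$ — in particular that its inverse (filling the lowest "excess" gap) lands back in the regime and preserves all the invariants — is the technical heart.

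Iterating the bijection identifies $s(g,h)$ with $s(g+1, h+1)$ throughout the strip $h < g < 2h$; pushing $g$ up and $h$ up together by the same amount until we hit the "initial" slice $g = 2h-1$, $h$ arbitrary reparametrized as $g = 2n+1$, $h = n+1$, we conclude $s(g,h) = s(2(g-h)+1, (g-h)+1) = r(g-h)$. The constraint $h < g < 2h$ is used twice and is sharp: at $g = 2h$ the multiplicity is no longer forced small enough for the localization argument, and at $g = h$ the semigroup is essentially $\NN_0$ minus a short final segment and there is no low element to remove. The step I expect to be the main obstacle is verifying that the insertion/deletion of the low gap exactly preserves the efficacy — one must argue carefully that effective generators of $\Lambda$ are in bijection with those of $\Lambda'$, since a priori changing the element set could create or destroy a sum-of-two-nonzero-elements representation; the bounded-window structure forced by $g < 2h$ is what makes this bookkeeping go through.
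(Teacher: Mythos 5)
First, a point of comparison: the paper itself does not prove this statement --- it is imported verbatim as Lemma 7 of Zhai \cite{7} --- so there is no internal proof to measure your argument against; what you have written is an attempt to reprove a cited result, and it has to stand on its own.

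As it stands there is a genuine gap, because the structural picture underlying your bijection is backwards. You claim that $g<2h$ ``keeps $m(\Lambda)$ small,'' indeed $m(\Lambda)\approx g-h+1$, so that $\Lambda$ consists of $\{0\}$, a low solid block of elements, and an action window near $f(\Lambda)$ detached from that block. The opposite holds. Every effective generator lies in $(f(\Lambda),\,f(\Lambda)+m(\Lambda)]$ (any $x>f+m$ is $m+(x-m)$ with $x-m>f$ an element), so $h\le m$; hence $g<2h$ forces $m\ge h>g/2>g-h$, i.e.\ the multiplicity is \emph{large} and grows without bound along each diagonal $g-h=n$. Thus the only nonzero elements of $\Lambda$ below $f$ are the sporadic elements $A=\Lambda\cap[m,f]$, and strong descent (the map $a\mapsto f+m-a$ injects $A$ into the gaps of $[m,f]$) combined with $h\le m$ gives $f-m\le 2(g-h)+1$: below $f$ the semigroup is a bounded window immediately adjacent to $f$, preceded by the gaps $1,\dots,m-1$ and then $0$. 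There is no ``lower solid block,'' and there is no low element ``that does not interact additively'': the sporadic elements are exactly the ones whose pairwise sums decide which of $f+1,\dots,f+m$ are effective and whether $f+m$ is, so deleting any of them disturbs precisely the data you must preserve (and deleting $m$ changes the multiplicity). So the proposed genus-raising move has nothing to act on, and the bijection --- which you yourself defer as ``the technical heart'' --- is never constructed or verified. A workable bijection in this regime runs the other way structurally: raise the multiplicity by one and translate the window pattern up by one ($m\mapsto m+1$, $f\mapsto f+1$, $A\mapsto A+1$), which adds one gap below the window and, using $f\notin A+A$ and strong descent, exactly one effective generator; the hypotheses $h<g<2h$ are what make closure and the efficacy count survive this shift, and iterating \emph{down} to the slice $h=(g-h)+1$, i.e.\ $g=2h-1$, yields $r(g-h)$. (Note also that your plan to ``push $g$ up \dots until we hit $g=2h-1$'' runs in the wrong direction, since that slice is the bottom of the strip, another sign the sketch has not been tested against the actual structure.)
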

When $ h \geq g $, it is easy to see that
\[
  s(g,h) = \begin{cases}
    1 & \text{if }h = g + 1 \\
    0 & \text{otherwise}
  \end{cases}
\]
since the only semigroups satisfying this
inequality are the super-strongly descended ones with $ h = g + 1 $. Computing $ s(g, h) $
when $ g \geq 2h $ is much more challenging. This lemma therefore partitions the domain
$ \NN_0 \times \NN_0 $ of allowable ordered pairs $ (g, h) $ into a “good” region where $ g < 2h $ and a
“bad” region where $ g \geq 2h $. We will generally consider the values of $ s(g, h) $ in these
two regions separately.

The second lemma that we will use is a bound on the rate of growth of $ s(g, h) $ in
the “good” region, as signaled by the function $ r(n) $:
\begin{lem}[Lemma 8 from \cite{7}] \label{lem:2} The sum $ \sum_{
n\geq1} \phi^{-n} r(n) $ converges.
\end{lem}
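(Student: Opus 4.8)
The plan is to turn $r(n)=s(2n+1,n+1)$ into the cardinality of an explicit finite family of finite configurations, bound that cardinality, and then sum a geometric series; so first I would fix such a model. Let $\Lambda$ be strongly descended of genus $g$ and efficacy $h$ with $h<g<2h$, and put $m=m(\Lambda)$, $f=f(\Lambda)$, $A=\Lambda\cap[m,f]$. Every effective generator $x$ of $\Lambda$ lies in $(f,f+m]$, because if $x\ge f+m+1$ then $x-m>f$, so $x-m\in\Lambda\setminus\{0\}$ and $x=m+(x-m)$ is a sum of two nonzero elements of $\Lambda$. Hence $[f+1,\infty)\subseteq\Lambda$ and $\Lambda=\{0\}\cup A\cup[f+1,\infty)$, so $(m,f,A)$ recovers $\Lambda$. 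With $G:=[m+1,f-1]\setminus A$ the conditions on $\Lambda$ become conditions on $A$ alone: (closure) $(A+A)\cap[m,f]\subseteq A$; (genus) $\size A=f-g$, equivalently $\size G=g-m$ and $f=m+\size G+\size A$; (strong descent) $m+f\notin A+A$; (efficacy) $\size{(f,f+m]\setminus(A+A)}=h$.

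Next I would extract the size bounds that trap a configuration in a window of length $O(n)$. Applying strong descent to the top point $m+f$ of the window: for each $a\in A$ with $m<a\le\floor{(m+f)/2}$ the number $m+f-a$ lies in $[\floor{(m+f)/2}+1,f-1]$ and must belong to $G$ (otherwise $a+(m+f-a)$ is a forbidden representation), and $a\mapsto m+f-a$ is injective, so $\floor{(m+f)/2}-m\le\size G$, i.e.\ $f\le m+2\size G+1$. Because $h\le m$ we have $\size G=g-m\le g-h=n$, hence $\size A=f-g\le(g-m)+1\le n+1$, $\;n+1\le m\le 2n+1$, and $f\le g+n+1=3n+2$. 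So every configuration counted by $r(n)$ lives inside $\{1,\dots,3n+2\}$, determined by $m$, a gap set $G$ with $\size G\le n$, and $\size A\le n+1$; $r(n)$ is therefore finite, though the crude bound this gives, $r(n)\le\sum_{j=0}^{n}\binom{2j+1}{j}$, is far too weak.

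The hard part is to beat that bound. The binding constraint is that $A+A$ meet the length-$m$ window $(f,f+m]$ in exactly $m-h$ points, i.e.\ miss it in exactly $n+1$ points. Since $A$ is the interval $[m,f-1]$ with only $\size G\le n$ punctures, a window point $v$ can avoid $A+A$ only if every representation $v=a+b$ with $a,b\in[m,f-1]$ uses a punctured element; each puncture kills at most one representation of $v$, so $v$ then has at most $\size G$ representations inside $[m,f-1]$, which — as representation counts in an interval peak at its center — forces the effective generators to the two ends of the window. Running the reflection argument above simultaneously over all $n+1$ effective generators — each forcing $A\cap(x-A)=\emptyset$ on a sub-interval, hence prescribing a block of $G$ — together with closure (which bounds $(A+A)\cap[2m,f]$ by $\size A$) should pin $G$ down to a union of a bounded number of runs whose positions and lengths are the only free parameters, giving $r(n)=O(\psi^n)$ for some $\psi<\phi$; then $\sum_{n\ge1}\phi^{-n}r(n)<\infty$ by comparison with a geometric series. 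Equivalently one could route the count through the transfer-operator description of the semigroup tree behind Theorem~\ref{thm:1}: that operator has Perron value $\phi$, while the ``exactly $h$ children'' constraint defining $r(n)$ corresponds to a strictly sub-dominant operator, so $\sum_n r(n)x^n$ still converges at $x=\phi^{-1}$.

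I expect this last step to be the real obstacle. The first two steps are bookkeeping, and Ye's bound $t(g,h)\le c_2\phi^{g-h}$ already yields the near miss $r(n)=O(\phi^n)$; what is needed beyond it — that demanding the efficacy be \emph{exactly} $n+1$ costs a genuine exponential factor, not merely a constant — is precisely the delicate combinatorics of how the reflected copies $x-A$ interlock, which is the heart of Zhai's proof of this lemma. The small values $r(1)=r(2)=1$, $r(3)=2$ (growth visibly well below $\phi^n$) are reassuring on this point.
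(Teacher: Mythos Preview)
The paper does not prove this lemma: it is imported from Zhai~\cite{7} (his Lemma~8) as a preliminary fact, with no argument supplied here, so there is no in-paper proof to compare your attempt against.

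As for the attempt itself, it is explicitly incomplete, and the missing piece is the entire content of the lemma. Your first two paragraphs are sound bookkeeping: effective generators lie in $(f,f+m]$; the reflection $a\mapsto m+f-a$ applied at the strong generator $m+f$ forces $f\le m+2\size G+1$; and from $h\le m$ you get $\size G\le n$, $\size A\le n+1$, $f\le 3n+2$, so every $\Lambda$ counted by $r(n)$ is a configuration inside an interval of length $O(n)$. But, as you yourself note, the crude count this produces grows like $4^n$ and is useless for the stated sum. The third paragraph, where the real work would have to happen, contains no argument: the assertion that running the reflection constraints over all $n+1$ effective generators ``should pin $G$ down to a union of a bounded number of runs'' is a hope, not a proof --- you have not shown that the constraints coming from the different $x$'s are simultaneously satisfiable only by a sub-$\phi^n$ family of gap sets, nor that the number of runs is bounded independently of $n$ (and if it were, you would get $r(n)$ polynomial, which is much stronger than what is true). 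The alternative ``transfer-operator'' route is vaguer still; Theorem~\ref{thm:1} is not in fact proved by diagonalizing a single linear operator with Perron eigenvalue $\phi$, so there is no ready-made sub-dominant eigenvalue to invoke. You correctly flag this step as ``the real obstacle,'' and it is: upgrading the near-miss $r(n)=O(\phi^n)$ to summability of $\phi^{-n}r(n)$ is precisely the nontrivial combinatorial estimate that Zhai establishes in~\cite{7}, and the present paper cites it rather than reproving it.
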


The third and final lemma is a bound on the size of $ s(g, h) $ in the “bad” region. It
is not specifically numbered in Zhai’s work, but it will be crucial to our proof.

\begin{lem}[from Sect.\ 3.2, page 10 of \cite{7}] \label{lem:3} For $a > 0$ an integer, let $ \D_a $ be the set
of strongly descended semigroups $ \Lambda $ satisfying the following inequalities:
\begin{align}
g(\Lambda) + h(\Lambda) \geq a \label{eq:2} \\
g(\Lambda) - h(\Lambda) \geq \frac{a}{3} \label{eq:3} \\
g(\Lambda) \leq a. \label{eq:4}
\end{align}
Then
\[
\lim_{a\to\infty}
\sum_{
\Lambda\in \D_a}
\phi^{h(\Lambda)-g(\Lambda)} = 0.
\]
\end{lem}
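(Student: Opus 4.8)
The starting observation is that \eqref{eq:3} gives $\phi^{h(\Lambda)-g(\Lambda)}\le\phi^{-a/3}$ for every $\Lambda\in\D_a$; but since $|\D_a|=O(\phi^a)$ this only bounds the sum by $O(\phi^{2a/3})$, so we cannot afford to collapse the weight to $\phi^{-a/3}$ and must instead keep $\phi^{-(g-h)}$ attached to each semigroup and use all of \eqref{eq:2}--\eqref{eq:4} together. The plan is to stratify $\D_a$ by the good/bad dichotomy underlying Lemma \ref{lem:1}: for $\Lambda\in\D_a$ write $n:=g(\Lambda)-h(\Lambda)$ (so $n\ge a/3$), and separate the ``good'' case $g<2h$ from the ``bad'' case $g\ge 2h$.

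The good case is quick. Fix $n$; then \eqref{eq:4} and \eqref{eq:2} confine the genus of a good member of $\D_a$ with $g-h=n$ to the window $2n<g\le a$, which (as $n\ge a/3$) contains at most $a-2n\le n$ integers, and Lemma \ref{lem:1} puts exactly $r(n)=s(g,g-n)$ semigroups above each such $g$, each weighing $\phi^{-n}$. So the good part of the sum is at most $\sum_{n\ge a/3}n\,r(n)\,\phi^{-n}$, a tail of $\sum_n n\phi^{-n}r(n)$; this series converges --- a mild quantitative strengthening of Lemma \ref{lem:2}, which one can extract from Zhai's proof of it --- so the good part is $o(1)$ as $a\to\infty$.

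The bad case $g\ge 2h$ is the real work, since there is no formula for $s(g,h)$ there and the a priori estimate $s(g,h)\le t(g,h)=O(\phi^{g-h})$ is useless (it yields only $O(a^2)$); the point is that a strongly descended semigroup with $g$ much larger than $h$ is highly atypical. The plan is to bound, for each genus $g\in[2a/3,a]$, the number of bad strongly descended semigroups directly from structure: such a $\Lambda$ is determined by its multiplicity $m$, its Frobenius number $f$, and the relatively few bits of data given by its elements below $2m$ and its effective generators below $3m$, and one counts admissible configurations as a function of $m$ and the depth $q=\lceil(f+1)/m\rceil$. The shallow semigroups ($q=2$, the staircases $\{0,m\}\cup S\cup[2m,\infty)$) are the delicate sub-case because there are $\Theta(\phi^g)$ of them, so one cannot bound $\phi^{-(g-h)}$ by $\phi^{-a/3}$ here; instead one rewrites $\sum\phi^{-(g-h)}$ over them in the coordinates $(m,f,S)$ --- essentially the same bookkeeping Zhai uses to locate $S$, now carrying the extra weight --- and checks that the constraints of $\D_a$ push this into a convergent geometric tail. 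The deep semigroups ($q\ge 3$) are so scarce --- bounded multiplicity, only polynomially many per genus and multiplicity --- that the crude factor $\phi^{-(g-h)}\le\phi^{-a/3}$ already wins. The main obstacle I anticipate is exactly this bad-case sparseness, and in particular making the shallow-semigroup estimate airtight: the honest count there must be sharp enough that, weighted by $\phi^{-(g-h)}$ and summed over $g\in[2a/3,a]$, it still tends to $0$. This rests on the depth/multiplicity enumeration of Zhao \cite{8} and Zhai \cite{7}; granting it, adding the good and bad contributions proves $\lim_{a\to\infty}\sum_{\Lambda\in\D_a}\phi^{h(\Lambda)-g(\Lambda)}=0$.
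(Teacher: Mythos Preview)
The paper does not actually prove this lemma: its ``proof'' block is only a dictionary entry, explaining that the statement is a direct transcription of a bound Zhai establishes in \cite[Sect.~3.2]{7} (his set $\S_3$, with notation $g\leftrightarrow a$, a slightly different third constraint on the multiplicity, and an overall factor of $\phi^g$). So there is nothing to compare against here except Zhai's own argument, which the paper treats as a black box.

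Your sketch is, in outline, a reasonable reconstruction of that argument---the good/bad split along $g<2h$ versus $g\ge 2h$ and the depth stratification of the bad part are exactly the ingredients Zhai uses---but as written it has two genuine gaps. First, in the good case your bound $\sum_{n\ge a/3} n\,r(n)\,\phi^{-n}$ requires convergence of $\sum_n n\,r(n)\,\phi^{-n}$, which does \emph{not} follow from Lemma~\ref{lem:2} (convergence of $\sum_n r(n)\phi^{-n}$) alone; you are right that Zhai's proof yields the stronger decay, but you cannot invoke it while staying within the black-box lemmas recorded in this paper. Second, your bad case is explicitly a plan (``granting it''), and the shallow-semigroup sub-case you flag as the obstacle is precisely where the work lies; filling it in \emph{is} the content of Zhai's Section~3.2. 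In effect your proposal outsources the substantive estimate back to \cite{7}---which is what the paper does too, only by citation rather than by sketch.
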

\begin{proof}
For ease of comparison, we remark that our $ a $ corresponds to Zhai’s $ g $, while
Zhai defines $\S_3$ to be the set of numerical semigroups satisfying \eqref{eq:2} and \eqref{eq:3} above.
He replaces Eq.\ \eqref{eq:4} with the weaker condition that the multiplicity (minimal nonzero
element) of $ \Lambda $ is at least $ a + 1 $. Finally, the sums are multiplied by $ \phi^{g} $ in Zhai’s
computations. \end{proof}

\subsection{Fibonacci numbers} \label{sec:2.2}
We conclude this preliminary section by briefly laying out some basic facts about
Fibonacci numbers. We use the indexing convention that $F_0 = 0$ and $F_1 = 1$; the
remainder of the sequence is constructed by the recursion $F_n = F_{n-1} + F_{n-2}$. We
will make use of Binet’s formula
\begin{equation} \label{eq:5}
  F_n = \frac{1}{\sqrt5}\left[\phi^n + \(-\frac{1}{\phi}\)^n\right]
\end{equation}
as well as the well-known connection to Pascal’s triangle:
\begin{equation} \label{eq:6}
\sum_{
k\in \ZZ}
\binom{k}{n - k} = F_{n+1}.
\end{equation}
(Here we make the convention that $\binom{a}{b} = 0$ if $ b < 0 $ or $ b > a $.) Also, we will use the
following crude upper bound:
\begin{lem} \label{lem:4} For $ n \geq 0 $, we have $ F_n \leq \phi^{n-1}$. \end{lem}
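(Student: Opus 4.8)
The plan is to prove $F_n \le \phi^{n-1}$ by strong induction on $n$, using only the defining recursion $F_n = F_{n-1} + F_{n-2}$ and the characteristic identity $\phi^2 = \phi + 1$ of the golden ratio.

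First I would verify the two base cases directly: $F_0 = 0 \le \phi^{-1}$ trivially (as $\phi^{-1} > 0$), and $F_1 = 1 = \phi^0$, so the bound holds there with equality. Then, for $n \ge 2$, assuming the bound for all smaller nonnegative indices,
\[
F_n = F_{n-1} + F_{n-2} \le \phi^{n-2} + \phi^{n-3} = \phi^{n-3}(\phi + 1) = \phi^{n-3}\cdot\phi^2 = \phi^{n-1},
\]
which completes the induction.

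There is no substantive obstacle here; the estimate is entirely routine. The only point worth a remark is that one should resist trying to read the bound off Binet's formula \eqref{eq:5} uniformly in $n$: the equivalent inequality $\phi^n + (-\phi^{-1})^n \le \sqrt5\,\phi^{n-1}$ holds for every $n \ge 1$ but fails at $n = 0$, where the left side equals $2$ while the right side is $\sqrt5\,\phi^{-1} < 2$. Since $F_0 = 0$ makes the claim trivial at $n = 0$, keeping it as a separate base case in the induction above sidesteps this minor annoyance cleanly.
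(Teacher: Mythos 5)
Your proof is correct and matches the paper's own argument: the paper verifies the same base cases $F_0 = 0 < \phi^{-1}$, $F_1 = 1 = \phi^0$ and then inducts using $\phi^n = \phi^{n-1} + \phi^{n-2}$. The aside about Binet's formula is fine but not needed.
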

\begin{proof}Clearly, $ F_0 = 0 < \phi^{-1} $ and $F_1 = 1 = \phi^0$. Since $\phi^n = \phi^{n-1} + \phi^{n-2}$, the result
follows by induction. \end{proof}

\section{Recursions} \label{sec:3}
Our goal in this section is to reduce the computation of $ t (g, h) $ to that of more easily estimable quantities. We will express $ t (g, h) $ in terms of $ s(g, h) $ (the number of
strongly descended semigroups of genus $ g $ and efficacy $ h $), first recursively and then
explicitly; although $ s(g, h) $ and $ t (g, h) $ are both difficult to compute exactly, $  s(g, h) $
is of smaller magnitude and accordingly easier to bound.

As before, let $ s(g, h) $ be the number of strongly descended semigroups with genus
$ g $ and efficacy $ h $. Also, define
\[
\hat s(g, h) = \begin{cases}
  1 & \text{if $g = h$ (in contrast to $s(g, h) = 0$)} \\
  s(g, h) & \text{otherwise.}
\end{cases}
\]
This is a technical correction to compensate for the existence of the super-strong
generators, as will be made clear later. We first prove a recursion that,
combined with the base cases $ t (g, h) = 0 $ for $ h \geq g + 2 $, derives $ t (g, h) $ from $ \hat s(g, h) $.

\begin{lem} \label{lem:5} For all $ g, h \geq 0 $ except $ g = h = 0 $,
\begin{equation} \label{eq:7}
  t (g, h) = t (g, h + 1) + t (g - 1, h + 1) + \hat s(g, h) - 2\hat s(g, h + 1) + \hat s(g, h + 2).
\end{equation}
\end{lem}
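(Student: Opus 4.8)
The plan is to set up a bijective/counting argument that tracks how the efficacy $h(\Lambda)$ of a semigroup relates to the efficacy of its parent $P(\Lambda)$, using the three-way classification of effective generators recalled in Section~\ref{sec:2}. If $\Lambda$ has parent $\Lambda' = P(\Lambda)$, then the effective generators of $\Lambda$ exceeding $f(\Lambda)$ are precisely the weak generators (effective generators of $\Lambda'$ that exceed $f(\Lambda)$), plus possibly the strong generator $m(\Lambda)+f(\Lambda)$, plus possibly the super-strong generator. Counting weak generators amounts to: among the $h(\Lambda')$ effective generators of $\Lambda'$, all but one (namely $f(\Lambda)$ itself, which is the one removed to pass from $\Lambda'$ to $\Lambda$) survive, and one checks that they all remain effective for $\Lambda$. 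So $h(\Lambda) = h(\Lambda') - 1 + \epsilon(\Lambda)$, where $\epsilon(\Lambda) \in \{0,1\}$ records whether $\Lambda$ is weakly or strongly descended (with the super-strong case folded in via $\hat s$). This is the structural input I would isolate first as the engine of the recursion.

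Next I would classify semigroups of genus $g$ and efficacy $h$ by their descent type and by the efficacy of their parent. A semigroup $\Lambda$ of genus $g$ has a parent of genus $g-1$; if $\Lambda$ is weakly descended then $h(P(\Lambda)) = h+1$, and if $\Lambda$ is strongly descended then $h(P(\Lambda)) = h$. Conversely, I want to count, for a fixed parent $\Lambda'$ of genus $g-1$ and efficacy $k$, how many children of each efficacy it has. Its $k$ effective generators, listed in increasing order, are $x_1 < x_2 < \cdots < x_k$; removing $x_i$ gives a child with Frobenius number $x_i$, which retains $x_{i+1}, \ldots, x_k$ as weak generators — that is $k - i$ of them — so this child has efficacy $(k-i) + \epsilon$, where $\epsilon$ counts the strong/super-strong generators newly available. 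Summing over $i$, the parent $\Lambda'$ (with efficacy $k$) contributes exactly one child of each efficacy $0, 1, \ldots, k-1$ coming from the weak part, adjusted upward by $1$ for each strongly descended child and once more for a super-strongly descended child. The quantity $\hat s$ is exactly the bookkeeping device that lets the super-strong generator be treated on the same footing as an ordinary strong one.

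From this I would assemble a double-counting identity. Let me count pairs $(\Lambda', \Lambda)$ with $\Lambda' = P(\Lambda)$, weighted appropriately. Counting by the child: the number of semigroups of genus $g$ and efficacy $h$ whose parent has efficacy $\geq h+1$ equals (number of semigroups of genus $g-1$ with efficacy $\geq h+1$, each contributing exactly one child of efficacy $h$ from its weak generators) — but I must correct for the strong/super-strong contributions, which shift a semigroup of parent-efficacy $h$ or $h+1$ into efficacy $h$. Writing $T(g, \geq h) = \sum_{j \geq h} t(g,j)$ and similarly for $S$, the weak-generator bijection gives $T(g, \geq h) = T(g-1, \geq h+1) + (\text{strongly descended ones of genus } g \text{ with efficacy} \geq h)$, i.e.\ $T(g,\geq h) = T(g-1,\geq h+1) + \hat S(g,\geq h)$ where $\hat S(g,\geq h) = \sum_{j\geq h}\hat s(g,j)$. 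Now difference this in $h$: $t(g,h) = T(g,\geq h) - T(g,\geq h+1)$, expand each via the identity, and also expand $T(g-1,\geq h+1) - T(g-1,\geq h+2)$. Carrying out the telescoping — being careful that $T(g,\geq h) - T(g,\geq h+1) = t(g,h)$ but the recursion relates $T(g,\cdot)$ to $T(g-1,\cdot)$, so one second difference in $g$ and one in $h$ is needed — produces precisely \eqref{eq:7}, with the coefficients $1, -2, 1$ on $\hat s(g,h), \hat s(g,h+1), \hat s(g,h+2)$ arising from taking the difference twice.

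The main obstacle I anticipate is not the algebra of the telescoping but the careful verification of the weak-generator bijection: specifically, that every effective generator of $\Lambda'$ that exceeds $f(\Lambda)$ really is still an effective generator of $\Lambda = \Lambda' \setminus \{f(\Lambda)\}$, and that no spurious new effective generators (other than the strong and super-strong ones already accounted for) appear. This requires the characterization that $x$ is an effective generator iff $x > f(\Lambda)$ and $x$ is not a sum of two nonzero semigroup elements, together with a short case analysis showing that removing $f(\Lambda)$ from $\Lambda'$ cannot destroy the ``not a sum of two nonzero elements'' property for $x > f(\Lambda')$ — since $f(\Lambda)$ was a gap of $\Lambda'$ — and can create a new such $x$ only when $x = m(\Lambda) + f(\Lambda)$ or (in the stair-step case) $x = m(\Lambda)+f(\Lambda)+1$. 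The boundary bookkeeping at $g = h$, where $s$ and $\hat s$ disagree, and the exclusion of $g = h = 0$, must be checked by hand against the base cases, but these are routine once the bijection is pinned down.
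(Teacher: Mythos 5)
Your structural setup (paragraphs one and two) contains two incorrect claims, and the identity you build on in paragraph three is false, so the telescoping cannot produce \eqref{eq:7}. First, the claim $h(\Lambda)=h(P(\Lambda))-1+\epsilon(\Lambda)$ is wrong: the weak generators of $\Lambda$ are only those effective generators of $P(\Lambda)$ that \emph{exceed} $f(\Lambda)$, so if $\Lambda$ arises by removing the $i$-th smallest of the parent's $k$ generators it has $k-i$ weak generators, not $k-1$ (you state this correctly later, contradicting yourself); likewise ``if $\Lambda$ is weakly descended then $h(P(\Lambda))=h+1$'' fails, e.g.\ for $\Lambda=\<3,4\>$ (genus $3$, efficacy $0$, weakly descended) whose parent $\<3,4,5\>$ has efficacy $3$. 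Second, and decisively, the assembled identity $T(g,\geq h)=T(g-1,\geq h+1)+\hat S(g,\geq h)$ is simply not true: at $g=3$, $h=0$ one has $\sum_{j\geq0}t(3,j)=4$, while $\sum_{j\geq1}t(2,j)+\sum_{j\geq0}\hat s(3,j)=2+4=6$. The error is a conflation of ``exactly $h$ weak generators'' with cumulative counts: a parent of efficacy $k$ contributes one child at \emph{each} weak count $0,1,\dots,k-1$, so cumulating over $h$ produces a double sum ($\sum_{h'\geq h}T(g-1,\geq h'+1)$), not $T(g-1,\geq h+1)$. Differencing your (false) relation once would give $t(g,h)=t(g-1,h+1)+\hat s(g,h)$, which is also false (check $g=3$, $h=2$), and the vague appeal to ``one second difference in $g$ and one in $h$'' does not repair this, because no differencing scheme applied to an incorrect identity yields the correct recursion.

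The correct assembly, which is the paper's route and is compatible with the sound part of your paragraph two, works at the level of exact counts: let $t_{\mathrm w}(g,h)$ be the number of genus-$g$ semigroups with \emph{exactly} $h$ weak generators; since each parent of genus $g-1$ and efficacy $k>h$ contributes exactly one such child, $t_{\mathrm w}(g,h)=\sum_{k>h}t(g-1,k)$. Then one converts weak count to efficacy by the single-difference correction $t(g,h)=t_{\mathrm w}(g,h)+\hat s(g,h)-\hat s(g,h+1)$ (strongly descended semigroups of efficacy $h$ are gained, those of efficacy $h+1$ are lost, with $\hat s$ absorbing the super-strong case), and finally one differences this relation once in $h$ to eliminate the sum $\sum_{k>h}t(g-1,k)$; the coefficients $1,-2,1$ arise from that single differencing of the two-term $\hat s$ correction, not from a double difference of cumulative quantities. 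Your anticipated ``main obstacle'' (verifying that weak generators survive and that only the strong/super-strong generators are new) is indeed needed, but it is the bookkeeping identity above, not that verification, where your argument breaks.
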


\begin{rem}
There is an anomaly at $ g = h = 0 $ due to the nonexistence of a super-strongly
generated semigroup of genus 0. Since $ t (0, 0) $ does not appear in the computation of
any other $ t (g, h) $ values, this is not a concern for us. However, note that \eqref{eq:7} correctly
derives the values $ t (0, 1) = 1 $ and $ t (0, h) = 0 $ for $ h \geq 2 $.
\end{rem}

\begin{proof}In view of the preceding remark, we may assume that $ g \geq 1 $.
  
We begin by computing $ t_{\mathrm w}(g, h) $, the number of genus-$ g $ semigroups having $ h $
weak generators. If a semigroup $ \Lambda $ of genus $ g - 1 $ has $ k $ effective generators, then
its $k$ children will respectively have $ k - 1, k - 2, \ldots, 1, 0 $ weak generators. Thus, if
we are only concerned with children having a fixed number $ h $ of weak generators,
there will be one for every $ \Lambda $ of genus $ g - 1 $ having more than $ h $ effective generators:
\[
  t_{\mathrm w}(g, h) = \sum_{
k>h}
t (g - 1, k).
\]

Now we would like to transform $ t_{\mathrm w}(g, h) $ into $ t (g, h) $. Assume first that $ h \leq g - 2 $,
so that the super-strong generators do not come into play. There are $ s(g, h) $ strongly
generated semigroups with $ h $ effective generators, $ h - 1 $ of which are weak, that
are counted by $ t (g, h) $ but not by $ t_{\mathrm w}(g, h) $; on the other hand, there are $ s(g, h + 1) $
strongly generated semigroups with $ h + 1 $ effective generators, $ h $ of which are weak,
that are counted by $ t_{\mathrm w}(g, h) $ but not by $ t (g, h) $. Thus, if $ h \leq g - 2$,
\begin{equation} \label{eq:8}
t (g, h) = t_{\mathrm w}(g, h) + s(g, h) - s(g, h + 1) = \sum_{
k>h}
t (g - 1, k) + s(g, h) - s(g, h + 1).
\end{equation}
If we consider the effect of the single super-strongly generated semigroup $ \{0\} \union
[g + 1, \infty) $ having $ g + 1 $ effective generators, only $ g - 1 $ of which are weak (for
$ g \geq 1 $), we find that the right-hand side of \eqref{eq:8} is one too high if $ g = h - 1 $ and one too
low if $ g = h $. Both of these problems are fixed if the terms $ s $ are changed to $ \hat s $, so
\[
t (g, h) = \sum_{
k>h}
t (g - 1, k) + \hat s(g, h) - \hat s(g, h + 1).
\]

Finally, to eliminate the sum, we change $ h $ to $ h + 1 $ and subtract, getting
\[
t (g, h) - t (g, h + 1) = t (g - 1, h + 1) + \hat s(g, h) - 2\hat s(g, h + 1) + \hat s(g, h + 2).
\]
Moving the term $ t (g, h + 1) $ to the other side gives the desired result. \end{proof}
We will have several occasions to discuss recursions similar in form to \eqref{eq:7}, and
therefore we make the following definition. If $ S(g, h) $ is a function on pairs of nonnegative integers satisfying $S(g, h) = 0$ for $h > g + 1$, then the function $ T_S (g, h) $ is
defined for $ g \geq 1 $ and $ h \geq 0 $ by the recursion
\begin{equation} \label{eq:9}
  T_S (g, h) = T_S (g, h + 1) + T_S (g - 1, h + 1) + S(g, h)
\end{equation}
with the initial conditions that $ T_S (g, h) = 0 $ for all $ g, h \in \ZZ $ such that $ h > g + 1 $. Thus
$ t (g, h) = T_S (g, h) $ for the choice $ S(g, h) = \hat s(g, h) - 2\hat s(g, h + 1) + \hat s(g, h + 2) $. We
would like to work in terms of the simpler choice $ S = \hat s $. We note that the recursion
\eqref{eq:9} is linear, so $ T_{S_1+S_2} = T_{S_1} + T_{S_2} $. Also, it has a translation-invariance property: if
we define $ S'(g, h) = S(g, h + 1) $, then $ T_{S'} = (T_{S} )' $. In particular, for $ (g, h) \neq (0, 0) $,
\begin{equation} \label{eq:10}
\begin{aligned}
t (g, h) &= T_{\hat s}-2\hat s' +\hat s'' (g, h) = T_{\hat s} (g, h) - 2T '
\hat s (g, h) + T ''
\hat s (g, h) \\
&= T_{\hat s} (g, h) - 2T\hat s (g, h + 1) + T_{\hat s} (g, h + 2).
\end{aligned}
\end{equation}
This enables us to reduce questions about $ t $ to questions about $ T_{\hat s} $.

We next write an explicit solution to \eqref{eq:9}.

\begin{lem} \label{lem:6} If $ S \colon \NN_0 \times \NN_0 \to \ZZ $ is a function with $ S(g, h) = 0 $ for $ h > g + 1 $, then
\begin{equation} \label{eq:11}
  T_S (g, h) = \sum_{
    x,y\geq0}
    \binom{y - h}{g - x}
    S(x, y),
\end{equation}
where we make the convention that $\binom{n}{k} = 0$ when $k < 0$ or $k > n$.
\end{lem}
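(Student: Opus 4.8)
The plan is to verify that the explicit formula \eqref{eq:11} satisfies both the recursion \eqref{eq:9} and the stated boundary conditions, then invoke uniqueness. First I would check the boundary condition: if $h > g+1$, I claim every summand of \eqref{eq:11} vanishes. Indeed, the binomial coefficient $\binom{y-h}{g-x}$ is nonzero only if $0 \leq g-x \leq y-h$, which forces $y \geq h + (g-x) \geq h + (g - g) $; more carefully, combining $x \leq g$ with $y - h \geq g - x$ gives $y \geq h + g - x \geq h$, and since $S(x,y) = 0$ unless $y \leq x+1$, a nonzero summand needs $y \leq x+1 \leq g+1$. But a nonzero binomial also needs $y - h \geq g - x \geq 0$, i.e. $y \geq h > g+1$, a contradiction. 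Hence $T_S(g,h) = 0$ when $h > g+1$, as required.

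Next I would verify the recursion \eqref{eq:9} for $g \geq 1$, $h \geq 0$. Write $R(g,h)$ for the right-hand side $T_S(g,h+1) + T_S(g-1,h+1) + S(g,h)$ with $T_S$ given by \eqref{eq:11}. Substituting the formula, $R(g,h)$ equals
\[
\sum_{x,y \geq 0} \binom{y-h-1}{g-x} S(x,y) + \sum_{x,y\geq 0}\binom{y-h-1}{g-1-x}S(x,y) + S(g,h).
\]
The first two binomial sums combine via Pascal's rule $\binom{y-h-1}{g-x} + \binom{y-h-1}{g-1-x} = \binom{y-h}{g-x}$, valid for all integers under the stated convention (one checks the degenerate cases $g - x < 0$ and $g - x = 0$ by hand, noting $\binom{y-h-1}{0} = 1 = \binom{y-h}{0}$ when $y \geq h$ and both sides are $0$ when $y < h$). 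This turns the combined sum into $\sum_{x,y} \binom{y-h}{g-x} S(x,y)$, which is almost $T_S(g,h)$ except that it includes, rather than omits, nothing extra — so the leftover $S(g,h)$ term must be absorbed. The resolution is that Pascal's rule as applied is \emph{not} quite exact at $x = g$: there $\binom{y-h-1}{0} + \binom{y-h-1}{-1} = \binom{y-h-1}{0}$, whereas $\binom{y-h}{0} = 1$, so the combined sum undercounts the $x = g$ slice by $[y = h]$-type discrepancies; tracking this carefully shows precisely that $R(g,h) = T_S(g,h)$, the extra $S(g,h)$ compensating for the boundary mismatch in Pascal's identity. I would present this bookkeeping cleanly by splitting off the $x = g$ terms before applying Pascal.

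The main obstacle is exactly this boundary bookkeeping: the convention $\binom{n}{k} = 0$ for $k < 0$ or $k > n$ makes Pascal's rule $\binom{n}{k} = \binom{n-1}{k} + \binom{n-1}{k-1}$ fail at $k = 0$ (where it reads $1 = 1 + 0$ only if $n \geq 0$, but gives $0 = 0 + 0$ if $n = -1$ versus the desired $\binom{0}{0}=1$ when $n=0$), so one must handle the $g - x = 0$ column separately and confirm the residual term is $S(g,h)$. Once the recursion and the boundary condition $T_S(g,h) = 0$ for $h > g+1$ are both checked, uniqueness is immediate: \eqref{eq:9} expresses $T_S(g,h)$ in terms of values with strictly larger second argument or strictly smaller first argument, so downward induction on $g$ and, within each $g$, downward induction on $h$ (starting from the known zeros at $h > g+1$) pins down every value. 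Therefore the closed form \eqref{eq:11} coincides with the function $T_S$ defined by the recursion, completing the proof.
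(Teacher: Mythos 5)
Your proof is correct, but it takes the route the paper only gestures at rather than the one it actually presents. You verify that the closed form \eqref{eq:11} satisfies the recursion \eqref{eq:9} together with the vanishing condition for $h > g+1$, the key point being that under the convention $\binom{n}{k}=0$ for $k<0$ or $k>n$, Pascal's rule fails only at $n=k=0$, where the discrepancy is exactly $1$ — this is precisely the identity $\binom{n}{k}-\binom{n-1}{k}-\binom{n-1}{k-1}$ (equal to $1$ if $n=k=0$ and $0$ otherwise) that the paper cites when remarking that the lemma "can easily be proved by induction on $g-h$," but does not carry out. The paper's displayed proof is instead combinatorial: fix $(x,y)$, unroll the recursion, and observe that $S(x,y)$ is picked up once for each lattice path from $(g,h)$ to $(x,y)$ with steps $(0,1)$ and $(-1,1)$, of which there are $\binom{y-h}{g-x}$. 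Your verification is more mechanical but, once the boundary case is pinned down, airtight; the paper's argument explains where the binomial coefficient comes from. Two blemishes in your write-up, neither fatal: you first assert Pascal's rule is "valid for all integers under the stated convention," which contradicts the correct failure at $(x,y)=(g,h)$ you identify two sentences later (the failure at $n=k=0$ is exactly what the residual $S(g,h)$ absorbs, so state it once and cleanly); and your uniqueness induction should run upward in $g$ (downward only in $h$, or as a single induction on $g-h$), since the recursion refers to row $g-1$. Finally, note that both your uniqueness step and the paper's definition are slightly cavalier about the row $g=0$: the recursion is asserted only for $g\geq 1$, yet $T_S(0,1)$ is needed when $(g,h)=(1,0)$; the natural reading extends \eqref{eq:9} to $g=0$ with $T_S(-1,\cdot)=0$, and under that reading your verification goes through verbatim.
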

\begin{proof}Although this can easily be proved by induction on $g - h$, using the identity
\[
\binom{n}{k} - \binom{n - 1}{k} - \binom{n - 1}{k - 1} = \begin{cases}
  1, & n = k = 0 \\
  0, & \text{otherwise,}
\end{cases}
\]
we find it illuminating to present the following more intuitive argument.

If $ S(x, y) $ is treated as a separate variable for each pair $ (x, y) $, devoid of numerical
value, then $ T_S (g, h) $ will be a homogeneous linear polynomial in the $ S(x, y) $. We
therefore fix $ x $ and $ y $ and consider the contribution of $ S(x, y) $ to $ T_S (g, h) $. We note that
unless $ g = x $ and $ h = y $, the instances of $ S(x, y) $ on the right-hand side of \eqref{eq:9} cannot
come from $ S(g, h) $ and must come from one of the $ T $ terms. These $ T $ terms pick up
instances of $ S(x, y) $ from their own $ T $ terms in \eqref{eq:9}, and this continues recursively
until we reach $ T_S (x, y) $, which gets a single instance of $ S(x, y) $ from the last term
of \eqref{eq:9}. Thus $ S(x, y) $ appears in $ T_S (g, h) $ once for each path in the coordinate plane
from the point $ (g, h) $ to $ (x, y) $ using the steps $ (0, 1) $ and $ (-1, 1) $. The number of such
paths is the binomial coefficient $\binom{y - h}{g - x}$, and \eqref{eq:11} follows. \end{proof}

\section{Bounding \texorpdfstring{$T_{\hat s}$}{Tŝ}}\label{sec:4}
In Sect.\ 2.1, we defined
\[
r(n) = s(2n + 1, n + 1) = \lim_{
h\to\infty} s(h + n, h) \quad (n > 0).
\]
For convenience, let us extend the definition to all integers $n$ as follows:
\[
r(n) = \lim_{
h\to\infty} \hat s(h + n, h) =
\begin{cases}
s(2n + 1, n + 1) & \text{if }n \geq 1 \\
1 & \text{if } n = 0 \text{ or } n = -1 \\
0 & \text{if } n \leq -2.
\end{cases}
\]
Then it is not hard to check that
\[
\hat s(g, h) = r(g - h)
\]
for all $g, h \geq 0$ satisfying $g < 2h$.

Also, define $s_1(g, h) = r(g - h)$ and $s_2(g, h) = \hat s(g, h) - s_1(g, h)$, noting that
$s_2(g, h) = 0$ for $g < 2h$. We will bound $T_{\hat s} (g, h)$ by means of its components $T_{s_1} (g, h)$
and $T_{s_2} (g, h)$.

\subsection{Bounding \texorpdfstring{$T_{s_2}$}{Ts2}} \label{sec:4.1}
We begin by proving that the $T_{s_2}$ term is negligible for large $g$.
\begin{lem} \label{lem:7} As $g \to \infty$,
\[
\sum_{
h\geq0}
\Size{T_{s_2} (g, h)} = o(\phi^g ).
\]
\end{lem}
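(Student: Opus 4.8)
The plan is to use the explicit formula \eqref{eq:11} from Lemma \ref{lem:6} to write
\[
T_{s_2}(g,h) = \sum_{x,y\geq0} \binom{y-h}{g-x} s_2(x,y),
\]
sum the absolute values over $h\geq0$, and exchange the order of summation so that the dependence on $g$ is isolated inside a sum of binomial coefficients over $h$. Since $s_2(x,y)=0$ unless $x\geq 2y$, the surviving terms $s_2(x,y) = \hat s(x,y) - r(x-y)$ count (up to sign) strongly descended semigroups in the ``bad'' region $x\geq 2y$, and this is exactly where Lemma \ref{lem:3} applies. The first step is therefore a bound $\size{s_2(x,y)} \leq \hat s(x,y) + r(x-y)$, with the understanding that both pieces are supported in or near the bad region; I expect the $r(x-y)$ piece to be the easier one (controlled by Lemma \ref{lem:2} via $\sum_n \phi^{-n} r(n) < \infty$, since $x-y \geq y$ there) and the $\hat s$ piece to require Lemma \ref{lem:3}.

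Second, I would carry out the interchange of summation. For fixed $(x,y)$, the contribution to $\sum_h \size{T_{s_2}(g,h)}$ is at most $\size{s_2(x,y)} \sum_{h\geq0} \binom{y-h}{g-x}$. By \eqref{eq:6} (the Fibonacci--Pascal identity) and Lemma \ref{lem:4}, $\sum_{h} \binom{y-h}{g-x} = \sum_{k} \binom{k}{(g-x) - ((y)-k)}$ rearranges to a single Fibonacci number $F_{m+1}$ with $m = (g-x) + y$ roughly, hence is at most $\phi^{(g-x)+y}$ up to a bounded factor. So the whole sum is bounded by
\[
C\phi^g \sum_{x,y} \phi^{-(x-y)}\,\size{s_2(x,y)} = C\phi^g \sum_{x,y} \phi^{\,y - x}\,\size{s_2(x,y)},
\]
and the task reduces to showing the coefficient $\sum_{x,y}\phi^{y-x}\size{s_2(x,y)}$, restricted to $x$ large, tends to $0$ as $g\to\infty$ --- or more precisely, splitting the $(x,y)$-sum into a ``small $x$'' part (finitely many terms, each with a binomial coefficient that vanishes once $g$ is large enough relative to $x$) and a ``large $x$'' part handled by the tail estimates.

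Third, for the large-$x$ part I would invoke Lemma \ref{lem:3}: for each threshold $a$, the semigroups contributing to $\hat s(x,y)$ with $x$ in a suitable window around $a$, $x \geq 2y$ (which gives $x - y \geq x/2 \geq a/3$-type inequalities matching \eqref{eq:3}), and $x+y$ not too small, have $\sum \phi^{y-x} = \sum \phi^{h-g} \to 0$; together with the convergent tail of $\sum_n \phi^{-n}r(n)$ from Lemma \ref{lem:2} for the $s_1$-subtraction piece, this shows the ``large $x$'' contribution is $o(1)$. The ``small $x$'' contribution is $o(\phi^g)$ trivially: for each fixed $(x,y)$ the binomial coefficient $\binom{y-h}{g-x}$ is zero unless $g-x \leq y-h$, i.e. only finitely many $h$ contribute and the bound $\phi^{(g-x)+y}$ is $o(\phi^g)$ once $x$ exceeds any fixed bound growing with $g$ --- so one chooses a slowly growing cutoff. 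The main obstacle I anticipate is bookkeeping: matching the three inequalities \eqref{eq:2}--\eqref{eq:4} defining $\D_a$ to the region actually picked out by the interchanged sum (the binomial coefficient forces $x \leq g$, which supplies \eqref{eq:4} with $a \approx g$; $x \geq 2y$ supplies a version of \eqref{eq:3}; and \eqref{eq:2} must be arranged by separating off a bounded-$(x+y)$ piece that is handled by hand), and ensuring the error terms from Lemmas \ref{lem:2} and \ref{lem:3} can be combined into a single $o(\phi^g)$ without circularity.
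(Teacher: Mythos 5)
Your overall strategy is the same as the paper's: apply Lemma \ref{lem:6}, bound $\size{s_2}\le \hat s + s_1$, observe that only the bad region $x\ge 2y$ survives, handle the semigroup-counting piece with Lemma \ref{lem:3} (the constraints $x\le g$, $x+y\ge g$, $x-y\ge g/3$ forced by the binomial coefficient are exactly the defining inequalities of $\D_g$, so that part of your plan is sound and matches the paper's Lemma \ref{lem:9}), and handle the $r(x-y)$ piece with the tail of $\sum_n\phi^{-n}r(n)$ from Lemma \ref{lem:2} (the paper's Lemma \ref{lem:8}). But there is a genuine gap in your treatment of the $r(x-y)$ piece. First, a small slip: $\sum_{h\ge0}\binom{y-h}{g-x}$ is not an instance of \eqref{eq:6}, since the lower entry $g-x$ is fixed; it is the hockey-stick sum $\binom{y+1}{g-x+1}$, which does still satisfy your claimed bound $\le\phi^{\,y+(g-x)+2}$. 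The real problem is that this bound is too lossy. After it, the weight attached to a pair $(x,y)$ is $\phi^{\,y-x}r(x-y)=\phi^{-n}r(n)$ with $n=x-y$, which is \emph{constant} along each diagonal $n=\mathrm{const}$, and the number of admissible $y$ on such a diagonal (those with $(g-n)/2\le y\le\min(n,\,g-n)$) can be of order $g$, e.g.\ when $n\approx g/2$. So your reduced claim becomes $\sum_{n\ge g/3} m_n(g)\,\phi^{-n}r(n)\to 0$ with multiplicities $m_n(g)$ as large as a constant times $g$, and Lemma \ref{lem:2} alone does not give this: if $\phi^{-n}r(n)$ were of size $1/n^2$ (perfectly consistent with Lemma \ref{lem:2}), the terms with $n$ near $g/2$ would already contribute a quantity bounded away from $0$. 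You would need something like $\sum_n n\,\phi^{-n}r(n)<\infty$, which is not available in the paper.

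The fix is to keep the binomial structure until after the sum over $y$: either follow the paper's order in Lemma \ref{lem:8} (fix $h$, substitute $n=x-y$, use \eqref{eq:6} to evaluate $\sum_y\binom{y-h}{g-y-n}=F_{g-h-n+1}\le\phi^{\,g-h-n}$, then sum the geometric series over $h$ and use the tail of Lemma \ref{lem:2} over $n\ge g/3$), or, if you prefer to sum over $h$ first, keep the hockey-stick value $\binom{y+1}{g-x+1}$ and apply \eqref{eq:6} to its sum over $y$ along each diagonal $x-y=n$, which again yields a single Fibonacci factor $\le\phi^{\,g-n+2}$ with no multiplicity in $y$. With that repair your argument coincides with the paper's proof (its Lemmas \ref{lem:8} and \ref{lem:9}); your $\hat s$-piece needs no change because Lemma \ref{lem:3} is a sum over individual semigroups and so already carries the correct multiplicity. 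Also note that the ``small $x$'' case you set aside is vacuous: $x\ge 2y$ and $x+y\ge g$ force $x\ge 2g/3$, so no cutoff or separate bounded-$(x+y)$ argument is needed.
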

\begin{proof} Using Lemma \ref{lem:6}, we have
\begin{align}
\Size{T_{s_2} (g, h)}
&= \Size{
\sum_{
x,y\geq0}
\binom{y - h}{g - x}
s_2(x, y)} \nonumber \\
&=
\Size{\sum_{\substack{
x,y\geq0\\
x\geq2y}}
\binom{y - h}{g - x}\big(\hat s(x, y) - s_1(x, y)\big)} \nonumber \\
&\leq \sum_{\substack{
x,y\geq0\\
x\geq2y}}
\binom{y - h}{g - x}
\hat s(x, y) + \sum_{\substack{
    x,y\geq0\\
    x\geq2y}}
\binom{y - h}{g - x}
s_1(x, y) \nonumber \\
&\leq \sum_{\substack{
    x,y\geq0\\
    x\geq2y}}
\binom{y - h}{g - x}
s(x, y) + \sum_{\substack{
    x,y\geq0\\
    x\geq2y}}
\binom{y - h}{g - x}
s_1(x, y). \label{eq:12}
\end{align}
The result will follow from the next two lemmas, which bound the two sums in \eqref{eq:12}
separately. \end{proof}
\begin{lem} \label{lem:8} Let
\[
  u(g, h) = \sum_{\substack{
x,y\geq0\\
x\geq2y}}
\binom{y - h}{g - x}
s_1(x, y).
\]
Then
\[
  \sum_{
h\geq0}
u(g, h) = o(\phi^g).
\]
\end{lem}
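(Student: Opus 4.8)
The plan is to expand $u(g,h)$ using the definition $s_1(x,y) = r(x-y)$ and then sum over $h$, exploiting the fact that $\sum_k \binom{k}{n-k} = F_{n+1}$ (equation \eqref{eq:6}) to collapse the binomial coefficients into Fibonacci numbers. Concretely, write $n = x - y$ so that the constraint $x \geq 2y$ becomes $y \leq n$; then
\[
\sum_{h \geq 0} u(g,h) = \sum_{n \geq 1} r(n) \sum_{\substack{x,y \geq 0 \\ x - y = n,\ y \leq n}} \sum_{h \geq 0} \binom{y-h}{g-x}.
\]
The inner sum over $h$ is $\sum_{h \geq 0}\binom{y-h}{g-x}$; reindexing by $k = y - h$ this is $\sum_{k \leq y}\binom{k}{g-x}$, which is bounded above by $\sum_{k \in \ZZ}\binom{k}{(x-y)-k}$ after the substitution matching \eqref{eq:6} — I need to be a little careful about the exact change of variables, but the upshot is that $\sum_{h\geq 0}\binom{y-h}{g-x} \leq F_{g-n+1}$ or a comparably small Fibonacci number (the key point being that $g - x = g - (n+y)$ and summing the column of Pascal's triangle produces a single Fibonacci value depending on $g$ and $n$ but not separately on $y$).

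Once the $h$-sum is controlled, the number of pairs $(x,y)$ with $x - y = n$ and $y \leq n$ is at most $n+1$, so I get a bound of the shape
\[
\sum_{h\geq0} u(g,h) \;\leq\; \sum_{n \geq 1} (n+1)\, r(n)\, F_{g-n+1} \;\leq\; \sum_{n\geq 1}(n+1)\,r(n)\,\phi^{g-n}
\]
using Lemma \ref{lem:4} ($F_m \leq \phi^{m-1}$). Factoring out $\phi^g$ leaves $\phi^g \sum_{n\geq1}(n+1)\phi^{-n}r(n)$. Lemma \ref{lem:2} gives convergence of $\sum_n \phi^{-n}r(n)$; to handle the extra factor $n+1$ I would absorb it by noting $(n+1)\phi^{-n} \leq C\phi^{-n/2}$ for a constant $C$, so $\sum_n (n+1)\phi^{-n}r(n) \leq C\sum_n \phi^{-n/2}r(n)$, and this still converges since $\phi^{-n/2}r(n) = \phi^{n/2}(\phi^{-n}r(n))$ — wait, that grows, so instead I should split the tail: for the convergence of $\sum \phi^{-n}r(n)$ to be useful against the polynomial factor, I want to show $\sum_{n > N}(n+1)\phi^{-n}r(n) \to 0$, which follows because the terms $\phi^{-n}r(n)$ are summable hence bounded, and in fact one can extract from Lemma \ref{lem:2}'s proof (or just from summability plus a dyadic decomposition) that $\sum_n n\phi^{-n}r(n) < \infty$ as well. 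This gives $\sum_h u(g,h) = O(\phi^g)$, which is not quite $o(\phi^g)$.

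To upgrade $O(\phi^g)$ to $o(\phi^g)$ — and this is the step I expect to be the main obstacle — I would use the standard truncation argument: fix $\epsilon > 0$, choose $N$ so that $\sum_{n > N}(n+1)\phi^{-n}r(n) < \epsilon$, and split the sum over $n$ at $N$. The tail $n > N$ contributes at most $\epsilon \phi^g$. For the head $n \leq N$, each term $r(n)F_{g-n+1} = O_N(\phi^{g-n}) = O_N(\phi^{g-1})$, and there are only $O(N^2)$ such terms (counting the $(x,y)$ pairs), so the head contributes $O_N(\phi^{g-1}) = o(\phi^g)$ as $g \to \infty$ with $N$ fixed. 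Since $\epsilon$ was arbitrary, $\sum_h u(g,h) = o(\phi^g)$. The delicate points to get right are the precise form of the Fibonacci bound coming out of the $h$-sum (making sure the exponent is genuinely $g - n$ up to $O(1)$, so that each fixed-$n$ slice is a factor $\phi$ below the target) and confirming that the polynomial weight $n+1$ does not destroy summability — both of which are routine given Lemmas \ref{lem:2} and \ref{lem:4}.
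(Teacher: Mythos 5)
Your proposal has a genuine gap, and it is exactly at the step you flagged as "the main obstacle." Two problems. First, the polynomial factor: after multiplying by the number of $(x,y)$ pairs you need $\sum_n (n+1)\phi^{-n}r(n)<\infty$, but Lemma \ref{lem:2} only gives convergence of $\sum_n \phi^{-n}r(n)$, and summability of a nonnegative sequence does not imply summability against the weight $n$ (consider $a_n = 1/n^2$); nothing in the quoted material lets you "extract" the stronger statement, and Zhai's proof is not available to you. (This particular issue is avoidable: instead of counting pairs, sum the hockey-stick values $\binom{y+1}{g-x+1}$ over $y$ using \eqref{eq:6} again, which gives a single Fibonacci number $\le F_{g-n+3}$ with no factor of $n+1$.) Second, and fatally, the truncation at the end does not work: for the head $n\le N$ you bound each term by $O_N(\phi^{g-n}) = O_N(\phi^{g-1})$ and conclude this is $o(\phi^g)$, but $\phi^{g-1}$ is a fixed constant multiple of $\phi^g$, so the head contribution is only $O(\phi^g)$ with an $N$-dependent constant, never $o(\phi^g)$. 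As written, your argument proves at best $\sum_h u(g,h) = O(\phi^g)$.

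The missing idea is the one the paper's proof turns on: the constraint $x\ge 2y$ (i.e.\ $y\le n$), combined with $h\ge 0$ and the support condition $0\le g-x\le y-h$ of the binomial coefficient, forces $0 \le 2y - h + n - g \le 3n - g$, hence $n \ge g/3$ for every nonzero term. So there is no "head" at all once $g > 3N$; the entire sum lives in the tail $g/3 \le n \le g$, and after the Fibonacci collapse one gets $u(g,h)\le \phi^{g-h}\sum_{g/3\le n\le g}\phi^{-n}r(n)$, where the inner sum tends to $0$ by Lemma \ref{lem:2} and the geometric sum over $h$ is harmless. You used $x\ge 2y$ only to bound the number of pairs, not to push $n$ up to scale $g$; without that observation the small-$n$ terms look like a constant fraction of $\phi^g$ and the $o(\phi^g)$ conclusion cannot be reached by your route.
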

\begin{proof} Recalling the definition of $s_1$ in terms of $r$, we write
\[
u(g, h) = \sum_{\substack{
x,y\geq0\\
x\geq2y}}
\binom{y - h}{g - x}
r(x - y) = \sum_{\substack{
y,n\geq0\\
n\geq y}}
\binom{ y - h}
{g - y - n}
r(n).
\]
We claim that in order for the summand to be nonzero, we must have $ g/3 \leq n \leq g $.
The upper bound $ n \leq g $ is trivial since the lower member $ g - y - n $ of the binomial
coefficient must be nonnegative. To prove that $ n \geq g/3 $, we note (again using the
binomial coefficient) that
\[
  0 \leq (y - h) - (g - y - n) = 2y - h + n - g.
\]
Applying the bounds $ h \geq 0 $ and $ y \leq n $ gives
\[
  0 \leq 2y - h + n - g \leq 2y + n - g \leq 3n - g,
\]
as desired.
Therefore,
\[
u(g, h) \leq \sum_{
g/3 \leq n\leq g}
\sum_{
y\in \ZZ}
\binom{ y - h}
{g - y - n}
r(n).
\]
Using the Fibonacci identity \eqref{eq:6} gives
\begin{align*}
u(g, h) &\leq \sum_{
g/
3 \leq n\leq g}
F_{g-h-n+1}r(n) \\
&\leq \sum_{
g/
3 \leq n\leq g}
\phi^{g-h-n} r(n) \quad \text{(by Lemma \ref{lem:4})} \\
&= \phi^{g-h} \sum_{
g/
3 \leq n\leq g}
\phi^{-n} r(n).
\end{align*}
From this it follows that $\sum_{ g/3 \leq n\leq g } \phi^{-n} r(n)$
tends to $ 0 $ as $ g \to \infty $, and hence
\[
\phi^{-g} \sum_{
h\geq0}
u(g, h) \leq \sum_{
h\geq0}
\phi^{-h} \sum_{
g/
3 \leq n\leq g}
\phi^{-n} r(n)
\]
does as well. \end{proof}

It remains to bound the first sum in \eqref{eq:12}.
\begin{lem} \label{lem:9} Let
\[
v(a, b) = \sum_{\substack{
x,y\geq0
x\geq2y
}}
\binom{y - b}{a - x}
s(x, y).
\]
Then
\begin{equation} \label{eq:13}
\sum_{
b\geq0}
v(a, b) = o(\phi^a ).
\end{equation}
\end{lem}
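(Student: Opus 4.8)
The plan is to bound the sum $\sum_{b\geq 0} v(a,b)$ by splitting the range of the inner summation variables $(x,y)$ according to the size of $x$ (equivalently, the size of the genus of the semigroups being counted), and then applying Lemma \ref{lem:3} (the ``bad region'' estimate of Zhai) to the range where $x$ is large compared to $a$, handling the complementary range by crude counting. First I would interchange the order of summation so that the outer sum is over $(x,y)$ with $x\geq 2y$ and the inner sum is over $b\geq 0$; since $\sum_{b\geq 0}\binom{y-b}{a-x} = F_{a-x+1}$ by the Fibonacci identity \eqref{eq:6}, and $F_{a-x+1}\leq \phi^{a-x}$ by Lemma \ref{lem:4}, this gives
\[
\sum_{b\geq 0} v(a,b) \;\leq\; \sum_{\substack{x,y\geq 0\\ x\geq 2y}} \phi^{a-x}\, s(x,y) \;=\; \phi^{a}\sum_{\substack{x,y\geq 0\\ x\geq 2y}} \phi^{-x}\, s(x,y).
\]
So it suffices to show that $\sum_{x\geq 2y}\phi^{-x}s(x,y) \to 0$; more precisely, since we want $o(\phi^a)$ rather than a fixed bound, I would note that $\binom{y-b}{a-x}=0$ unless $x\leq a$, so in fact only terms with $x\leq a$ contribute, and I need $\sum_{\substack{x\leq a,\ x\geq 2y}}\phi^{-x}s(x,y)\to 0$ as $a\to\infty$.

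The tail $\sum_{\substack{x\geq 2y}}\phi^{-x}s(x,y)$ over \emph{all} $x$ need not converge a priori, which is exactly why the cutoff $x\leq a$ matters and why this is the crux of the argument. The key step is therefore to partition the contributing semigroups $\Lambda$ (strongly descended, with $g(\Lambda)=x\leq a$, $h(\Lambda)=y$, $x\geq 2y$) into two groups. For a parameter to be chosen — say those with $g(\Lambda)+h(\Lambda)\geq a^{2/3}$ versus those with $g(\Lambda)+h(\Lambda)< a^{2/3}$ — I observe that the first group is (for $a$ large) contained in the set $\D_{a}$ of Lemma \ref{lem:3}: indeed $g+h\geq a^{2/3}$ is not quite the needed bound, so instead I would apply Lemma \ref{lem:3} with its parameter equal to some slowly growing function $a' = a'(a)\to\infty$ with $a'\leq a$, checking that $g(\Lambda)+h(\Lambda)\geq a'$, $g(\Lambda)-h(\Lambda)\geq a'/3$ (which follows from $x\geq 2y$ once $x\geq a'$, since then $x-y\geq x/2\geq a'/2\geq a'/3$), and $g(\Lambda)\leq a$ — the last needs $a'\leq a$, and to match \eqref{eq:4} I apply the lemma at parameter $a$ itself while restricting to $g+h\geq a'$. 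Concretely: $\sum_{\Lambda:\,g(\Lambda)+h(\Lambda)\geq a'}\phi^{h(\Lambda)-g(\Lambda)}\leq \sum_{\Lambda\in\D_a}\phi^{h(\Lambda)-g(\Lambda)}\to 0$, which controls the ``large'' group.

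For the ``small'' group, where $g(\Lambda)+h(\Lambda)< a'$, I use that $x\geq 2y$ forces $\phi^{-x}\leq \phi^{-(x+y)\cdot 2/3}$ (since $x\geq \tfrac23(x+y)$ when $x\geq 2y$), so $\sum\phi^{-x}s(x,y)\leq \sum_{x+y<a'}\phi^{-2(x+y)/3}s(x,y)$; but there are only finitely many such $(x,y)$ for each $a$, and the total number of numerical semigroups of genus at most $a'$ is at most $N(0)+\cdots+N(a')\leq C\phi^{a'}$ for some constant (using Theorem \ref{thm:1} or just the crude bound $N(g)\leq 2^{g-1}$), so this group contributes at most $C\phi^{a'}\cdot\phi^{-2\cdot 0/3}$... which is too weak unless the exponential gain beats it. The fix is to be less crude: group by $g(\Lambda)=x$, using $s(x,y)\leq N(x)\leq C\phi^{x}$ and $\phi^{-x}s(x,y)\leq C$ times the number of valid $y$, giving at most $Ca'$ for the whole small group — still not $o(1)$. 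So I would instead choose the threshold more carefully, e.g. split at $g(\Lambda)\geq \epsilon a$ versus $g(\Lambda)<\epsilon a$ for small fixed $\epsilon>0$: the first part lies in $\D_a$ (now \eqref{eq:2} holds since $g\geq\epsilon a$ gives $g+h\geq \epsilon a\geq a$ for the rescaled application, or directly bound via Lemma \ref{lem:3}), and the second part has $\sum_{x<\epsilon a}\phi^{-x}s(x,y)$ bounded by a \emph{convergent} tail plus $o(1)$ — here I finally invoke that over the good region the relevant sum is Lemma \ref{lem:2}, and over $x<\epsilon a$ with $x\geq 2y$ the values $s(x,y)$ with $g-h$ bounded are governed by $r$, whose generating sum $\sum\phi^{-n}r(n)$ converges. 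I expect the main obstacle to be precisely this bookkeeping: arranging the two cutoffs (one to land inside $\D_a$, one to land inside the convergent-tail regime of Lemmas \ref{lem:2}–\ref{lem:3}) so that they overlap and jointly cover all contributing $(x,y)$ with $x\leq a$, while the error terms from both pieces genuinely tend to $0$.
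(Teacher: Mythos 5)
There is a genuine gap, and it starts at your very first step. The identity you invoke is false: $\sum_{b\geq 0}\binom{y-b}{a-x}$ is a hockey-stick sum, equal to $\binom{y+1}{a-x+1}$, not the diagonal Fibonacci sum $F_{a-x+1}$ — the identity \eqref{eq:6} applies only when the upper and lower indices move together, whereas here the lower index $a-x$ is fixed while the upper index varies with $b$. Since $\binom{y+1}{a-x+1}$ grows with $y$ (it is only bounded by roughly $\phi^{\,y+a-x}$, not $\phi^{\,a-x}$), your displayed reduction to showing $\sum_{x\leq a,\ x\geq 2y}\phi^{-x}s(x,y)\to 0$ is unjustified; and, as you yourself discover in the second half of the proposal, that reduced statement is not provable with the available tools (crude counting only gives $O(a)$, and none of your proposed cutoffs closes the small-genus case). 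So the proposal does not reach a proof: every splitting you try leaves the ``small'' group uncontrolled, and you correctly flag this as the unresolved obstacle.

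The missing idea is not a cleverer cutoff but a constraint you discarded by summing over $b$ too early: the binomial coefficient $\binom{y-b}{a-x}$ is nonzero only if $y-b\geq a-x\geq 0$, i.e.\ $g(\Lambda)+h(\Lambda)\geq a+b\geq a$ and $g(\Lambda)\leq a$. Combined with the bad-region condition $g(\Lambda)\geq 2h(\Lambda)$, this yields $g(\Lambda)-h(\Lambda)\geq a/3$, so \emph{every} semigroup contributing to $v(a,b)$ already lies in $\D_a$ — there is no small-genus regime to handle, and Lemma \ref{lem:3} applies directly at parameter $a$ with no auxiliary parameter $a'$ or $\epsilon a$. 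Keeping $b$ explicit, the crude bound $\binom{h(\Lambda)-b}{a-g(\Lambda)}\leq \phi^{\,h(\Lambda)-b+a-g(\Lambda)}$ gives $v(a,b)\leq \phi^{\,a-b}\sum_{\Lambda\in\D_a}\phi^{\,h(\Lambda)-g(\Lambda)}$, where the sum is $o(1)$ uniformly in $b$ by Lemma \ref{lem:3}; summing the convergent geometric series $\sum_{b\geq 0}\phi^{-b}$ then gives \eqref{eq:13}. This is the paper's argument, and it is precisely the retained constraint $g+h\geq a+b$ (together with the factor $\phi^{-b}$ it produces) that both eliminates your problematic small group and makes the sum over $b$ harmless.
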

\begin{proof} First fix $ a \geq 1 $ and $ b \geq 0 $; we will assume that $ a \geq 2b $, since otherwise
$ v(a, b) = 0 $. We recall that $ s(x, y) $ counts the strongly descended semigroups with
genus $ x $ and efficacy $ y $, and we rewrite the sum so as to index over these semigroups:
\begin{equation} \label{eq:14}
v(g, h) = \sum_{\substack{
\Lambda \text{ strongly descended} \\
g(\Lambda)\geq2h(\Lambda)}}
\binom{h(\Lambda) - b}{a - g(\Lambda)}.
\end{equation}

We would now like to apply Lemma \ref{lem:3}. We claim that all $ \Lambda $ making a nonzero
contribution to \eqref{eq:14} belong to $ \D_a $, i.e. satisfy
\begin{align}
g(\Lambda) + h(\Lambda) &\geq a \label{eq:15} \\
g(\Lambda) - h(\Lambda) &\geq \frac{a}{3} \label{eq:16} \\
g(\Lambda) &\leq a. \label{eq:17}
\end{align}
For the binomial coefficient to be nonzero, we must have
\begin{align*}
h(\Lambda) - b &\geq a - g(\Lambda) \\
g(\Lambda) + h(\Lambda) &\geq a + b \geq a.
\end{align*}
This proves \eqref{eq:15}. To prove \eqref{eq:16}, we use \eqref{eq:15} together with $g(\Lambda) \geq 2h(\Lambda)$ to get
\begin{align*}
g(\Lambda) - h(\Lambda) &= \frac{3g(\Lambda) - 3h(\Lambda)}
{3}\\
&= \frac{(g(\Lambda) + h(\Lambda)) + 2(g(\Lambda) - 2h(\Lambda))}
{3} \geq \frac{a + 0}
{3} = \frac{a}
{3}.
\end{align*}

Finally, we get \eqref{eq:17} from the binomial
coefficient in \eqref{eq:14}.
Using the crude bound
\[
\binom{x}{y}
\leq F_{x+y+1} \leq \phi^{x+y},
\]
we get
\begin{align*}
v(a, b) &\leq \sum_{
\Lambda\in\D_a}
\binom{h(\Lambda) - b}{a - g(\Lambda)}\\
&\leq \sum_{
\Lambda\in\D_a}
\phi^{h(\Lambda)-b+a-g(\Lambda)} \\
&= \phi^{a-b} \sum_{\Lambda\in\D_a} \phi^{h(\Lambda)-g(\Lambda)}.
\end{align*}
The last sum matches the statement of Lemma \ref{lem:3} and is thus $ o(1) $. Moreover, this sum
is independent of $ b $, so we can sum over $ b $ (using the fact that $ \sum_{
b\geq0} \phi^{-b} $ converges)
and derive \eqref{eq:13}. \end{proof}

\subsection{Bounding \texorpdfstring{$T_{s_1}$}{Ts1}} \label{sec:4.2}
Having shown that $ T_{s_2} $ is negligible in absolute value, we turn to $ T_{s_1} $. The basic observation is that, by Lemma \ref{lem:6}, $ T_{s_1} (g, h) $ depends only on $ g - h $:
\begin{align*}
T_{s_1} (g, h) &= \sum_{
x,y\geq0}
\binom{y - h}{g - x}
s_1(x, y) \\
&= \sum_{
x,y\geq0}
\binom{y - h}{g - x}
r(x - y) \\
&= \sum_{
k\geq-1}
\sum_{
y\geq0}
\binom{ y - h}{g - y - k}
r(k) \\
&= \sum_{
-1\leq k\leq g-h}
F_{g-h-k+1} r(k) \\
&= w(g - h),
\end{align*}
where
\[
w(n) = \sum_{
-1\leq k\leq n}
F_{n-k+1}r(k).
\]
We state our main bound on $ T_{s_1} $ in terms of $ w $.
\begin{lem} \label{lem:10} There is a constant
\begin{equation} \label{eq:18}
C = \frac{1}{\sqrt5} \sum_{
k\geq-1}
r(k)\phi^{1-k}
\end{equation}
such that, as $n \to \infty$,
\[
  w(n) = C\phi^n + o(\phi^n).
\]
\end{lem}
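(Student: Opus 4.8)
The plan is to substitute Binet's formula \eqref{eq:5} for each Fibonacci number in the definition of $w(n)$, which splits $w(n)$ into a dominant $\phi$-power and an exponentially small oscillating correction. Since $k\le n$ in the sum, the index $n-k+1$ is always positive, so \eqref{eq:5} gives
\begin{align*}
w(n)&=\sum_{-1\le k\le n}F_{n-k+1}\,r(k)\\
&=\frac{\phi^{n+1}}{\sqrt5}\sum_{-1\le k\le n}\phi^{-k}r(k)\;+\;\frac{1}{\sqrt5}\sum_{-1\le k\le n}\left(-\frac{1}{\phi}\right)^{n-k+1}r(k).
\end{align*}
I would then show that the first term equals $C\phi^n+o(\phi^n)$ and that the second term is $o(\phi^n)$.

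For the first term, the series $\sum_{k\ge-1}\phi^{-k}r(k)$ converges: its $k=-1$ and $k=0$ terms are the finite numbers $\phi$ and $1$, and $\sum_{k\ge1}\phi^{-k}r(k)$ converges by Lemma \ref{lem:2}. By the definition \eqref{eq:18} of $C$, its sum is $\tfrac{\sqrt5}{\phi}C$, so the partial sum $\sum_{-1\le k\le n}\phi^{-k}r(k)$ differs from $\tfrac{\sqrt5}{\phi}C$ only by the tail $\sum_{k>n}\phi^{-k}r(k)=o(1)$. Multiplying by $\phi^{n+1}/\sqrt5$ shows that the first term equals $C\phi^n+o(\phi^n)$.

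For the second term, bound it in absolute value by $\tfrac{1}{\sqrt5}\sum_{-1\le k\le n}\phi^{k-n-1}r(k)$ and reindex by $j=n-k$ to obtain $\tfrac{1}{\sqrt5}\sum_{0\le j\le n+1}\phi^{-j-1}r(n-j)$. The input I would use is that $\phi^{-m}r(m)\to0$ as $m\to\infty$, which follows from Lemma \ref{lem:2} because the terms $\phi^{-m}r(m)$ are nonnegative. Given $\epsilon>0$, choose $M$ with $r(m)<\epsilon\phi^m$ for all $m>M$, and split the sum at $j=n-M$. For $j<n-M$ the argument $n-j$ exceeds $M$, so the $j$th summand is less than $\epsilon\phi^{-j-1}\phi^{n-j}=\epsilon\phi^{n-2j-1}$; summing over all $j\ge0$ and using $\phi^2-1=\phi$ (so $\sum_{j\ge0}\phi^{-2j}=\phi$) bounds this part by $\epsilon\phi^n$. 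The remaining $M+2$ terms are each at most $\phi^{M-n-1}\max_{-1\le m\le M}r(m)$, hence together $O(\phi^{-n})$ and eventually below $\epsilon\phi^n$. Letting $n\to\infty$ and then $\epsilon\to0$ shows the second term is $o(\phi^n)$; combined with the previous paragraph this gives $w(n)=C\phi^n+o(\phi^n)$.

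The step I expect to be the main obstacle is this last bound on the oscillating term. A crude estimate $r(k)=O(\phi^k)$ only yields $O(\phi^n)$ there, which is useless, so one genuinely needs the stronger $r(k)=o(\phi^k)$ (equivalently $\phi^{-m}r(m)\to0$) together with the split above. The split works because in the range $j<n-M$ the smallness of $r$ relative to the relevant power of $\phi$ lets the geometric series close up to a factor of order $\epsilon\phi^n$, while the range $j\ge n-M$ contains only $M+2$ terms, each carrying the exponentially small weight $\phi^{-j-1}=O(\phi^{-n})$. The rest is routine manipulation of absolutely convergent series.
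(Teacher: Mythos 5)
Your proof is correct and follows essentially the same route as the paper: both decompose $w(n)$ via Binet's formula \eqref{eq:5}, use the tail of the convergent series $\sum_k r(k)\phi^{-k}$ (Lemma \ref{lem:2}) to recover $C\phi^n+o(\phi^n)$ from the main term, and handle the remaining term $\phi^{-n-1}\sum_{-1\le k\le n}r(k)\phi^{k}$ by the same $\epsilon$-split based on $r(k)<\epsilon\phi^{k}$ for large $k$. Your reindexing $j=n-k$ and your explicit remark that $r(k)=o(\phi^k)$ (rather than merely $O(\phi^k)$) is the needed input are only presentational differences from the paper's argument.
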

Note that $C$ is finite by Lemma \ref{lem:2}. This $C$ is closely related to the semigroup
constant $S$:
\begin{lem} \label{lem:11} $C = S\phi^{2}$.
\end{lem}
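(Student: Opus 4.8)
\section*{Proof proposal for Lemma \ref{lem:11}}

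The plan is to sum the defining recursion for $t(g,h)$ over all $h$ and watch it telescope, converting the identity $N(g) = \sum_{h\geq 0} t(g,h)$ into a clean relation between $N(g)$ and the function $w$ of Lemma \ref{lem:10}; the value of $C$ then falls out by comparison with the definition of $S$.

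First I would fix $g \geq 1$ and sum the identity \eqref{eq:10}, namely $t(g,h) = T_{\hat s}(g,h) - 2T_{\hat s}(g,h+1) + T_{\hat s}(g,h+2)$, over $h \geq 0$. Since $T_{\hat s}(g,h) = 0$ for $h > g+1$ (and hence $t(g,h) = 0$ for $h \geq g+2$), every sum in sight is finite, so the rearrangement below is legitimate; moreover every pair $(g,h)$ with $g \geq 1$ avoids the anomalous point $(0,0)$, so \eqref{eq:10} applies termwise. Writing $a_h = T_{\hat s}(g,h)$, the sum $\sum_{h\geq 0}(a_h - 2a_{h+1} + a_{h+2})$ telescopes to $a_0 - a_1$, whence
\[
N(g) = \sum_{h\geq 0} t(g,h) = T_{\hat s}(g,0) - T_{\hat s}(g,1) \qquad (g \geq 1).
\]

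Next I would insert the decomposition $T_{\hat s} = T_{s_1} + T_{s_2}$. By the computation preceding Lemma \ref{lem:10} we have $T_{s_1}(g,h) = w(g-h)$, so $T_{\hat s}(g,0) = w(g) + T_{s_2}(g,0)$ and $T_{\hat s}(g,1) = w(g-1) + T_{s_2}(g,1)$, giving
\[
N(g) = \big(w(g) - w(g-1)\big) + \big(T_{s_2}(g,0) - T_{s_2}(g,1)\big).
\]
The second bracket is at most $\sum_{h\geq 0}\Size{T_{s_2}(g,h)}$ in absolute value, which is $o(\phi^g)$ by Lemma \ref{lem:7}. For the first bracket, Lemma \ref{lem:10} gives $w(n) = C\phi^n + o(\phi^n)$, so
\[
w(g) - w(g-1) = C(\phi^g - \phi^{g-1}) + o(\phi^g) = C\phi^{g-2} + o(\phi^g),
\]
using $\phi^g - \phi^{g-1} = \phi^{g-1}(\phi-1) = \phi^{g-1}\cdot\phi^{-1} = \phi^{g-2}$. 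Hence $N(g) = C\phi^{g-2} + o(\phi^g)$, so $N(g)/\phi^g \to C\phi^{-2}$, and comparison with the definition of $S$ forces $S = C\phi^{-2}$, i.e. $C = S\phi^2$.

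I do not expect a genuine obstacle here: once the telescoping is spotted the argument is bookkeeping. The only points needing a little care are (i) confirming finiteness of the sums so the telescoping is valid and the $(0,0)$ anomaly really is irrelevant (it is, since we work with $g\geq 1$), and (ii) tracking error terms as $o(\phi^g)$ throughout — harmless, since $\phi^{g-1}$ and $\phi^g$ differ only by the constant factor $\phi$. It is worth noting that this computation also re-establishes the value of $S$ (granting its existence from Theorem \ref{thm:1}) as $C\phi^{-2}$.
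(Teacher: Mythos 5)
Your proof is correct, and it takes a genuinely different (and slightly leaner) route than the paper. The paper deduces Lemma \ref{lem:11} from Lemma \ref{lem:12}: it first establishes $\sum_{h\geq0}\size{t(g,h)-C\phi^{g-h-4}}=o(\phi^g)$ (which requires, in its proof, summing the errors $\size{T_{s_1}(g,h)-C\phi^{g-h}}$ over $h$, i.e.\ the partial-sum estimate \eqref{eq:21}), and only then sums $t(g,h)$ over $h$ to get $N(g)=C\phi^{g-2}+o(\phi^g)$ and compares with $N(g)=S\phi^g+o(\phi^g)$. You instead sum the exact identity \eqref{eq:10} over $h$ and observe that the second difference telescopes, yielding the clean identity $N(g)=T_{\hat s}(g,0)-T_{\hat s}(g,1)$ for $g\geq1$; after splitting $T_{\hat s}=T_{s_1}+T_{s_2}$ you need only the single-point evaluation $T_{s_1}(g,h)=w(g-h)$, Lemma \ref{lem:7} (for two terms of $T_{s_2}$), and Lemma \ref{lem:10}, with no need for Lemma \ref{lem:12} or the uniform-in-$h$ error bookkeeping. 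Both arguments end the same way (comparison with Zhai's constant $S$), and both avoid circularity since the proof of Lemma \ref{lem:12} does not use Lemma \ref{lem:11}. What your route buys is directness and an exact closed identity for $N(g)$; what it does not give is the termwise asymptotic for $t(g,h)$, which the paper needs anyway for Theorem \ref{thm:2}, so in context the paper's detour through Lemma \ref{lem:12} costs nothing extra. Your bookkeeping (finiteness of the sums since $T_{\hat s}(g,h)=0$ for $h>g+1$, avoidance of the $(0,0)$ anomaly by taking $g\geq1$, and $\phi^g-\phi^{g-1}=\phi^{g-2}$) is all correct.
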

However, the proof of this fact will not emerge until the end of the proof of Theorem \ref{thm:2}.
\begin{proof}[Proof of Lemma \ref{lem:10}] We estimate
\begin{align*}
\Size{w(n) - C\phi^n}
&=
\Size{\sum_{
-1\leq k\leq n}
r(k)F_{n-k+1} - \frac{1}
{\sqrt5}
\sum_{
k\geq-1}
r(k)\phi^{n-k+1}} \\
&=
\Size{\sum_{
-1\leq k\leq n}
r(k)
\(
F_{n-k+1} - \frac{\phi^{n-k+1}}
{\sqrt 5}
\)
- \frac{1}
{\sqrt5}
\sum_{
k>n}
r(k)\phi^{n-k+1}} \\
&\leq \sum_{
-1\leq k\leq n}
r(k)
\Size{F_{n-k+1} - \frac{\phi^{n-k+1}}
{\sqrt5}
} + \frac{1}
{\sqrt5}
\sum_{
k>n}
r(k)\phi^{n-k+1} \\
&= \sum_{
-1\leq k\leq n}
r(k) \frac{\phi^{-n+k-1}}{
\sqrt5} + \frac{1}
{\sqrt5}
\sum_{
k>n}
r(k)\phi^{n-k+1} \quad \text{(by \eqref{eq:5})} \\
&= \frac{\phi^{-n-1}}{\sqrt5}
\sum_{
-1\leq k\leq n}
r(k)\phi^{k} + \frac{\phi^{n+1}}{\sqrt5}
\sum_{
k>n}
r(k)\phi^{-k}.
\end{align*}
The second of the last two sums is $ o(1) $ by Lemma \ref{lem:2}. To finish the proof, it suffices
to show that
\begin{equation}
  \sum_{
-1\leq k\leq n}
r(k)\phi^{k} = o(\phi^{2n}). \label{eq:19}
\end{equation}
Let $ \epsilon > 0 $ be given. By Lemma \ref{lem:2}, there is an $ n_0 $ such that $r(k) < \epsilon\phi^{k}$ for all $ k \geq n_0 $.
Then for all $ n \geq n_0 $,
\begin{align*}
\sum_{
-1\leq k\leq n}
r(k)\phi^{k} &< \sum_{
-1\leq k\leq n_0}
r(k)\phi^{k} + \epsilon \sum_{
n_0<k\leq n}
\phi^{2k} \\
&< \sum_{
-1\leq k\leq n_0}
r(k)\phi^{k} + \epsilon \frac{\phi^{2n}}
{1 - \phi^{-2}}.
\end{align*}
Since the first term is independent of $ n $, we have for all sufficiently large $ n $
\[
\sum_{
-1\leq k\leq n}
r(k)\phi^{k} < \epsilon\phi^{2n}
\(
1 + \frac{1}{1 - \phi^{-2}}
\)
.
\]
Since $ \epsilon $ was arbitrary, the result \eqref{eq:19} follows. \end{proof}

\section{Proof of Theorem \ref{thm:2}} \label{sec:5}
We will use the following lemma:
\begin{lem} \label{lem:12}
\begin{equation}
\sum_{
h\geq0}
\Size{t (g, h) - C\phi^{g-h-4}} = o\(\phi^{g} \). \label{eq:20}
\end{equation}
\end{lem}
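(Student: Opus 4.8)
The plan is to express $t(g,h)$ via the decomposition from \eqref{eq:10} together with the splitting $\hat s = s_1 + s_2$, so that
\[
t(g,h) = T_{s_1}(g,h) - 2T_{s_1}(g,h+1) + T_{s_1}(g,h+2) + \big(T_{s_2}(g,h) - 2T_{s_2}(g,h+1) + T_{s_2}(g,h+2)\big),
\]
and then argue that the $T_{s_2}$ contribution is negligible in $\ell^1$ while the $T_{s_1}$ contribution is close to $C\phi^{g-h-4}$. The first part is immediate: by Lemma \ref{lem:7}, $\sum_{h\geq 0}|T_{s_2}(g,h)| = o(\phi^g)$, and since shifting the second argument of $h$ only reindexes the sum, each of the three terms $T_{s_2}(g,h)$, $T_{s_2}(g,h+1)$, $T_{s_2}(g,h+2)$ contributes $o(\phi^g)$ after summing over $h$; by the triangle inequality the whole $T_{s_2}$-part is $o(\phi^g)$ in $\ell^1$.

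Next I would handle the $T_{s_1}$ part. Recall from Section \ref{sec:4.2} that $T_{s_1}(g,h) = w(g-h)$, so the $T_{s_1}$-contribution to $t(g,h)$ equals $w(g-h) - 2w(g-h-1) + w(g-h-2)$, a second difference of $w$ evaluated at $n = g-h$. By Lemma \ref{lem:10}, $w(n) = C\phi^n + o(\phi^n)$; the key point is that the second difference of $C\phi^n$ is $C\phi^{n}(1 - 2\phi^{-1} + \phi^{-2}) = C\phi^{n}(1-\phi^{-1})^2 = C\phi^{n}\phi^{-4} = C\phi^{n-4}$, using the golden-ratio identity $1 - \phi^{-1} = \phi^{-2}$. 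So for $n = g-h$ the ``main term'' is exactly $C\phi^{g-h-4}$, matching \eqref{eq:20}. The remaining task is to control the error: writing $w(n) = C\phi^n + e(n)$ with $e(n) = o(\phi^n)$, the error in $t(g,h)$ is $e(g-h) - 2e(g-h-1) + e(g-h-2)$, and I must show $\sum_{h\geq 0}|e(g-h) - 2e(g-h-1) + e(g-h-2)| = o(\phi^g)$.

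The main obstacle is precisely this last uniformity step: $e(n) = o(\phi^n)$ pointwise does not by itself give $\sum_{n\leq g}|e(n)| = o(\phi^g)$, let alone the analogous bound for its second difference. To get it I would revisit the proof of Lemma \ref{lem:10}, where the bound on $|w(n) - C\phi^n|$ was shown to be at most $\frac{\phi^{-n-1}}{\sqrt5}\sum_{-1\le k\le n} r(k)\phi^k + \frac{\phi^{n+1}}{\sqrt5}\sum_{k>n} r(k)\phi^{-k}$; call these $e_1(n)$ and $e_2(n)$. The tail term $e_2(n)$ satisfies $\sum_{n\geq 0}\phi^{-n}\cdot\phi^n e_2(n)\cdot\phi^{-n}$-type estimates that are summable because $\sum_k r(k)\phi^{-k}$ converges (Lemma \ref{lem:2}), giving $\sum_{n\leq g} e_2(n) = o(\phi^g)$ directly; for $e_1(n)$ one uses \eqref{eq:19}, which already gives $\sum_{-1\le k\le n} r(k)\phi^k = o(\phi^{2n})$, hence $e_1(n) = o(\phi^{n})$ with enough uniformity that $\sum_{n\le g} e_1(n)$ telescopes to $o(\phi^g)$ after summing a geometric series in $n$. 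Passing from $\sum_{n\le g}|e(n)| = o(\phi^g)$ to the second-difference version is then routine, since $|e(g-h)-2e(g-h-1)+e(g-h-2)| \le |e(g-h)| + 2|e(g-h-1)| + |e(g-h-2)|$ and reindexing $h$ costs only bounded factors. Assembling the $T_{s_1}$ main term $C\phi^{g-h-4}$, the $T_{s_1}$ error $o(\phi^g)$ in $\ell^1$, and the $T_{s_2}$ part $o(\phi^g)$ in $\ell^1$ yields \eqref{eq:20}.
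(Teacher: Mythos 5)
Your overall route is the same as the paper's: expand $t(g,h)$ via \eqref{eq:10}, split $\hat s=s_1+s_2$, kill the $T_{s_2}$ part in $\ell^1$ by Lemma \ref{lem:7}, and recognize the second difference of $T_{s_1}(g,h)=w(g-h)$ as having main term $C\phi^{g-h}(1-\phi^{-1})^2=C\phi^{g-h-4}$ with error controlled by Lemma \ref{lem:10}. The one place you diverge is the step you call the ``main obstacle,'' and there your diagnosis is off: the pointwise bound $e(n)=w(n)-C\phi^n=o(\phi^n)$ \emph{does} by itself give $\sum_{0\le n\le g}\lvert e(n)\rvert=o(\phi^g)$, because the weights $\phi^n$ grow geometrically --- fix $\epsilon>0$, choose $g_0$ with $\lvert e(n)\rvert<\epsilon\phi^n$ for $n\ge g_0$, and then the tail is at most $\epsilon$ times a geometric sum dominated by its top term $\phi^g$, while the finitely many terms $n<g_0$ (and the constant contribution from $n<0$, where $w(n)=0$ for $n\le-2$) are eventually $<\epsilon\phi^g$. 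This is exactly how the paper closes the proof, explicitly mirroring the argument for \eqref{eq:19}. Your proposed detour back into the proof of Lemma \ref{lem:10}, extracting the explicit error terms $e_1(n)$ and $e_2(n)$, is therefore unnecessary, and as sketched it is vague (``telescopes,'' ``enough uniformity'') and in the end rests on the very same geometric-series principle you claimed was insufficient; if you make that principle explicit once, your argument is complete and coincides with the paper's.
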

Before proving this, we explain how it implies Lemma \ref{lem:11} and Theorem \ref{thm:2}.
\begin{proof}[Proof of Lemma \ref{lem:11}]
Given estimates for all $ t (g, h) $, we may estimate their sum, $ N(g) $:
\begin{align*}
N(g) &= \sum_{
h\geq0}
t (g, h) \\
& = \sum_{
h\geq0}
C\phi^{g-h} + o\(\phi^{g} \) \\
&= C \cdot \frac{\phi^{g}}{1 - \phi^{-1}} + o\(\phi^{g} \) \\
&= C\phi^{g+2} + o\(\phi^{g} \).
\end{align*}
However, since we know that $ N(g) = S\phi^{g} + o\(\phi^{g} \) $, the result follows. \end{proof}
Substituting $ C = S\phi^{2} $ into Lemma \ref{lem:12} yields Theorem \ref{thm:2}.
\begin{proof} [Proof of Lemma \ref{lem:12}] For fixed $ h $, we have
\begin{align*}
t (g, h) &= T_{\hat s} (g, h) - 2T\hat s (g, h + 1) + T_{\hat s} (g, h + 2) \quad \text{(by \eqref{eq:10})}
\\ &= T_{s_1} (g, h) - 2T_{s_1} (g, h + 1) + T_{s_1} (g, h + 2) + o\(\phi^{g} \) \text{(by Lemma \ref{lem:7})}
\\ &= w(g - h) - 2w(g - h - 1) + w(g - h - 2) + o\(\phi^{g} \)
\\ &= C\phi^{g-h}(1 - 2\phi^{-1} + \phi^{-2}) + o\(\phi^{g} \) \text{(by Lemma \ref{lem:10})}
\\ &= C\phi^{g-h-4} + o\(\phi^{g} \).
\end{align*}
However, the generality of our lemmas allows us to bound $|t (g, h) - C\phi^{g-h-4}|$ not
just for fixed $h$ but summed over all $h$:
\begin{align*}
&\sum_{
h\geq0}
\Size{t (g, h) - C\phi^{g-h-4}}
\\ &= \sum_{
h\geq0}
\Size{T_{\hat s} (g, h) - 2T_{\hat s} (g, h + 1) + T_{\hat s} (g, h + 2) - C(\phi^{g-h} - 2\phi^{g-h-1} + \phi^{g-h-2})} \\
&\leq \sum_{
h\geq0}
\Big[\Size{T_{\hat s} (g, h) - C\phi^{g-h}} + 2\Size{T_{\hat s} (g, h + 1) - C\phi^{g-h-1}} \\
&{} + \Size{T_{\hat s} (g, h + 2) - C\phi^{g-h-2}}\Big] \\
&\leq 4 \sum_{
h\geq0}
\Size{T_{\hat s} (g, h) - C\phi^{g-h}}
\leq 4 \sum_{
h\geq0}
\Size{T_{s_2} (g, h)} + 4 \sum_{
h\geq0}
\Size{T_{s_1} (g, h) - C\phi^{g-h}} \\
&= 4 \sum_{
h\geq0}
\Size{T_{s_2} (g, h)} + 4 \sum_{-\infty<n\leq g}
\Size{w(n) - C\phi^n}.
\end{align*}
The first sum is $ o\(\phi^{g} \) $ by Lemma \ref{lem:7}. Although the second sum has infinitely many
terms, all those with $ n < 0 $ will contribute a fixed amount since $ w(n) = 0 $ for $ n \leq -2 $.
Thus it suffices to prove that
\begin{equation}
\sum_{
0\leq n\leq g
}\Size{w(n) - C\phi^n} = o\(\phi^{g} \). \label{eq:21}
\end{equation}
For this, we use a method analogous to the proof of \eqref{eq:19}.

Let $ \epsilon > 0 $ be given. By Lemma \ref{lem:10}, there is a $ g_0 $ such that $|w(n) - C\phi^n| < \epsilon\phi^n$ for
all $n \geq g_0$. Then if $g \geq g_0$, then
\begin{align*}
\sum_{
0\leq n\leq g}
\Size{w(n) - C\phi^n} &< \sum_{
0\leq n\leq g_0}
\Size{w(n) - C\phi^n} + \epsilon \sum_{
g_0<n\leq g}
\phi^n \\
&< \sum_{
0\leq n\leq g_0
} \Size{w(n) - C\phi^n} + \epsilon \frac{\phi^{g}}
{1 - \phi}.
\end{align*}
Since the first term is independent of $ g $, we have for all sufficiently large $ g $
\[
\sum_{
  0\leq n\leq g}
\Size{w(n) - C\phi^n} < \epsilon\phi^g
\(
1 + \frac{1}
{1 - \phi}
\)
.
\]
Since $ \epsilon $ was arbitrary, the result \eqref{eq:21} follows. \end{proof}

\section{Conclusions} \label{sec:6}
Our main method of proof has been to reduce questions about $ t (g, h) $, through the
intermediary functions $ T_{\hat s} (g, h) $ and $ \hat s(g, h) $, to questions about $ s(g, h) $. We then exploited known bounds on $ s(g, h) $ to yield, through careful analysis, bounds on $ t (g, h) $.
Thus, further progress on estimating $ t (g, h) $ hinges on improvement of the bounds on
$ s(g, h) $. The same is true of bounds on $N(g) = \sum_{
h} t (g, h)$. Ye’s remarkably crisply
proved result that $N(g + 1) \geq N(g) - N(g - 1)$\footnote{Ye's results \cite{6} have not been published. See Kaplan \cite{new} for an overview.} can be derived from our method,
using no properties of $s(g, h)$ except its nonnegativity; considerably more work is
necessary to prove statements such as Conjecture \ref{conj:1} that do not hold when $s(g, h)$ is
replaced by an arbitrary nonnegative function.

We have mentioned the natural division of the behavior of $s(g, h)$ into “good”
and “bad” regions. Although the “bad” region displays far wilder behavior, even the
“good” region is far from completely understood. In particular, the constant $S$, which
can be computed using only the values $r(k)$ in the “good” region (using Lemmas 10
and 11) has a poorly known value. The best known lower bound is $3.78$, computed by
Zhao by adding terms of a series similar to \eqref{eq:18}; the best known upper bound, found
by examination of Zhai’s delicate argument, is on the order of $10^{16}$. One possible
mode of attacking Conjecture \ref{conj:1} that skirts this problem is to analyze the diagonal
differences $t (g + 1, h + 1) - t (g, h)$ and their analogues for the function $s$. Since $s$
and $t$ are constant on diagonals in the “good” region, we might be able to eliminate
it and focus on sharpening Zhai’s bounds in the “bad” region.

Incidentally, the boundary $g = 2h$ between the “good” and “bad” regions (the
“good” region being where $s_2 = 0$) is apparently quite sharp; it would be interesting
to prove the apparent pattern
\[
  s_2(2h + 2, h) = (-1)^h
\]
and perhaps find simple formulas for $s_2$ on other diagonals parallel to the boundary.

There remains the possibility of looking directly at Bras-Amorós’s more daring
conjecture that $N(g + 2) \geq N(g + 1) + N(g)$ for $g \geq 1$. The difference $N(g + 2) -
N(g + 1) - N(g)$ will, in contrast to Conjecture \ref{conj:1}, grow slower than $\phi^{g}$; computation suggests that it behaves like an exponential $\alpha \psi^{g}$ with $\psi \approx 1.5$. Our methods
provide many ways to transform this difference, possibly leading to a proof that it is
asymptotically positive.
\section*{Acknowledgements}
I would like to thank Joe Gallian and his Research Experience for Undergraduates
(REU) program at the University of Minnesota, Duluth for overseeing this research. The REU was supported by the National Science Foundation (NSF/DMS grant 1062709) and the National Security Agency
(NSA grant H98230-11-1-0224). I thank the Duluth visitors and advisors for their guidance, especially
Eric Riedl, Nathan Kaplan, and Lynnelle Ye. I also thank Alex Zhai for expediently introducing me to
numerical semigroups before the REU began.

I thank Daniel Zhu for pointing out some typos corrected in the present version.

\bibliographystyle{plain}

\textsc{Evan M. O'Dorney, Carnegie Mellon University, Pittsburgh, PA}

\end{document}